\title[ ]{ Criteria for  embedded  eigenvalues  for  discrete Schr\"odinger operators }
\author{Wencai Liu}
\address[Wencai Liu]{Department of Mathematics, University of California, Irvine, California 92697-3875, USA}\email{liuwencai1226@gmail.com}
\theoremstyle{plain}
\newtheorem{theorem}{Theorem}[section]
\newtheorem{corollary}[theorem]{Corollary}
\newtheorem{lemma}[theorem]{Lemma}
\newtheorem{proposition}[theorem]{Proposition}
\newcommand{\R}{\mathbb{R}}
\newcommand{\Z}{\mathbb{Z}}
\newcommand{\N}{\mathbb{N}}
\theoremstyle{definition}
\newtheorem{remark}[theorem]{Remark}
\begin{document}


\begin{abstract}
In this paper,  we consider   discrete  Schr\"odinger operators of the form,
\begin{equation*}
     (Hu)(n)=  u({n+1})+u({n-1})+V(n)u(n).
\end{equation*}
We view $H$  as a  perturbation of  the free operator $H_0$, where $(H_0u)(n)=  u({n+1})+u({n-1})$.
For $H_0$ (no perturbation), $\sigma_{\rm ess}(H_0)=\sigma_{\rm ac}(H)=[-2,2]$ and  $H_0$ does not have eigenvalues embedded into $(-2,2)$.
 It is an interesting  and important problem to identify the perturbation such that  the operator $H_0+V$ has one eigenvalue (finitely many eigenvalues or countable eigenvalues)
 embedded into $(-2,2)$.
 We introduce the {\it almost sign  type potential } and develop  the Pr\"ufer transformation to   address this problem,
 which leads to the   following five  results.

 \begin{description}
   \item[1] We obtain the sharp spectral  transition for  the existence of   irrational type eigenvalues  or rational type    eigenvalues  with even denominator.
   \item[2] Suppose  $\limsup_{n\to \infty} n|V(n)|=a<\infty.$
  We obtain a lower/upper  bound of $a$ such that $H_0+V$ has  one rational type    eigenvalue with odd denominator.
   \item[3] We  obtain the asymptotical behavior   of  embedded  eigenvalues around the boundaries of $(-2,2)$.
   \item [4]Given any finite set of points  $\{ E_j\}_{j=1}^N$ in   $(-2,2)$ with $0\notin \{ E_j\}_{j=1}^N+\{ E_j\}_{j=1}^N$,
we construct  potential  $V(n)=\frac{O(1)}{1+|n|}$ such that
$H=H_0+V$ has eigenvalues $\{ E_j\}_{j=1}^N$.
\item[5]Given any countable set of  points  $\{ E_j\}$ in  $(-2,2)$ with $0\notin \{ E_j\}+\{ E_j\}$,
and any function $h(n)>0$ going  to infinity arbitrarily slowly,
we construct  potential $|V(n)|\leq \frac{h(n)}{1+|n|}$ such that
$H=H_0+V$ has eigenvalues $\{ E_j\}$.
 \end{description}

%
%
%

\end{abstract}
\maketitle
\section{Introduction}

We consider the  discrete Schr\"odinger equation,
\begin{equation}\label{Gdis}
  (Hu)(n)=  u({n+1})+u({n-1})+V(n)u(n)=Eu(n),
\end{equation}
 where $V(n)$ is the perturbation.
 Denote by $H_0$   the free Schr\"odinger operator, namely,
 \begin{equation*}
    (H_0u)(n)= u({n+1})+u({n-1}).
 \end{equation*}
  Without loss of generality, we only consider the discrete Schr\"odinger operator in the half line $\N$.
  \begin{equation}\label{Gdis1}
   (Hu)(n)=  u({n+1})+u({n-1})+V(n)u(n)=Eu(n)\;\;\;\;\;(n\geq 1)
  \end{equation}
  with boundary condition
  \begin{equation}\label{Gbc}
    \frac{u(1)}{u(0)}=\tan\theta.
  \end{equation}
Similarly, we also have the  continuous Schr\"odinger operator $H=H_0+V(x)$.

Our goal is to identify the asymptotical behavior of the  potentials $V$ such that  there
is one   eigenvalue (finitely many eigenvalues, or infinitely many eigenvalues)   embedded into the absolutely continuous   and  essential spectra.
The identification of    eigenvalues/singular continuous spectrum embedded into ac (ess) spectrum    attracted much  attention   from different viewpoints, for example
\cite{sim07,kissc,kru,lotoreichik2014spectral,Naboko2018,lukwn,luk14,lukd1,simonov2016zeroes,KRS,KLS,MR2945209,remlingsharp}.

Suppose
   $ \limsup_{x\to \infty} x|V(x)|=a$. 
Our first interest is the  study of the sharp transition for the single eigenvalue embedded into the ac spectrum or ess spectrum.

Let us introduce the history of the continuous case first.
By a result of Kato \cite{kato}, there is no eigenvalue $E$ with $E>a^2$, which holds in any dimension.
From the classical Wigner-von Neumann  type functions
\begin{equation*}
    V(x)=\frac{c}{1+x}\sin(  kx+\phi),
\end{equation*}
we know that one can not do better than $\frac{a^2}{4}$.
For the
one dimensional case, Atkinson and Everitt  \cite{atk} obtained the optimal bound $\frac{4a^2}{\pi ^2}$, that is
there is no eigenvalue if  $E>\frac{4a^2}{\pi ^2}$ and   there are examples
with eigenvalues arbitrarily close to this bound.
The proof is based on Pr\"ufer transformation and sign  type potentials, that is
\begin{equation*}
    V(x)=\frac{c}{1+x}\text{ sgn } (  \sin (kx+\phi)).
\end{equation*}
 We refer the readers to Simon's paper for the full  history \cite{simon2017tosio}. Recently, we  constructed   examples such that  the optimal bound  $\frac{4a^2}{\pi ^2}$ can be achieved \cite{Lstark}.

One purpose of this paper is to obtain a similar sharp result for the discrete case. In   the following,   all the potentials  satisfy
 \begin{equation*}
   | V(n)|=\frac{O(1)}{1+n^{\frac{3}{4}}}.
 \end{equation*}
  By Weyl's theorem,  the essential spectrum of $H_0+V$ equals  $[-2,2]$. Moreover, the interval $(-2,2)$ is covered with absolutely continuous spectrum, cf. \cite{DK}.
 For any $E\in(-2,2)$, let $E=2\cos \pi k(E)$ with $k\in(0,1)$.  Sometimes, we use $k$ for simplicity.

 {\bf Definition.}
 We say $E\in(-2,2)$ is of rational (resp. irrational) type if $k(E)$ is rational (resp. irrational).
  We say  $ E\in(-2,2)$ is of rational type with even (odd) denominator if the denominator of   rational number $k(E)$ is even (odd).

However, the question of sharp bounds for embedded  eigenvalues  in discrete case  is much more delicate than   in the continuous case. The bounds heavily depend on the arithmetic property  of $k(E)$.
If $k(E)$ is irrational, Remling's arguments  imply $H=H_0+V$ does not have eigenvalue in $(-2\sqrt{1-A^2a^2},2\sqrt{1-A^2a^2})$ with $A=\frac{2}{\pi}$ \cite{remlingsharp}.
Like the continuous case,   Wigner-von Neumann  type functions
\begin{equation*}
    V(n)=\frac{c}{1+n}\sin(  kn+\phi),
\end{equation*}
can only give  the   bound $A=\frac{1}{2}$ (see Lemma \ref{Keylemma}). Thus there is a gap between $\frac{1}{2}$ and $\frac{2}{\pi}$.
We use the Pr\"ufer transformation (cf. \cite{remlingsharp,KLS,KRS}) and sign  type potentials for the discrete case to show that $\frac{2}{\pi}$ is sharp. See Theorems \ref{thm1}-\ref{thm3}.

The  most important contribution of the present paper is to study the sharp bounds for rational $k(E)$, which is missing in     previous literature.
Suppose the denominator of $k(E) $ is $q$. The  average  of $|\sin(2\pi k(E)n+\phi)|$ with respect to $n$ over  each period $q$  depends on the initial phase $\phi$, which is different from the irrational $k(E)$.
Predicting  the  sharp bounds is  the first challenge since there is no ergodic theorem at hand.
More importantly,
it is very easy to break the initial phase in   each period.  Another issue   is that there are two transition lines for the discrete case (see Theorems \ref{thm2} and \ref{thm3}). For the continuous Schr\"odinger operator, the average does not depend on energies $E$,   so  it is   easier to deal with.
 Thus the problem of embedded eigenvalues for the  discrete case has significant new challenges.

We distinguish the eigenvalues by  the denominator of $k(E)$ is even and   the denominator of $k(E)$ is odd.
Obtaining the sharp transitions for the rational type eigenvalues depends on whether we can construct potentials in each period and control the initial phase $\phi$ after each
period $q$.

For even denominator case,  our idea is to choose a good potential $V$ ( half of the  values of    $V(n)$ are positive and the remaining half are negative in each period)
to create some cancellation so that  we can almost keep the initial phase $\phi$ after each period.
It is very difficult to construct a potential which creates  the cancellation,  does not change the initial $\phi$  and   achieves  the optimal bounds at the same time.
    We address this problem by taking the second leading entry of the equation of  the Pr\"ufer angle (see \eqref{PrufT}) into consideration, which is more delicate than the usual method.
Even when using this way, there are    two issues to be addressed.  The first issue is that   $\cot\pi x$  is a singular function so it is difficult  to control the derivative. Luckily, the trajectories of    $\{\phi+jk\}_{j\in\N}$ that   we must choose (in order to achieve the optimal bound) can be shown  to be far way from the singular points of functions $\cot\pi x$ (see \eqref{G4equ5dec8ap34}).
The second issue is that  the usual  sign  type potentials cannot change the initial phase much after one period. However, it will destroy the initial phase after plenty of  periods  since there is no    full cancellation.
We adapt the potentials a little bit (we call the result {\it almost sign  type potentials})
to create  the full cancellation  by solving an  algebraic equation (see \eqref{Gapr6algebraic}).

For the odd case,  it is impossible to create such  cancellation.
Thus the initial phases will change in every period.
By  some delicate estimates,   we get two nice bounds  ${B}_q $  and $A_q$, where $q$ is the denominator of $k(E)$.
See \eqref{GAqeven}, \eqref{GAqodd}  and \eqref{GAqoddprime} for the definitions of $B_q $  and $A_q$.
However, there is still a gap between $B_q $  and $A_q$.
We should mention that $B_q $  and $A_q$ are close. In particular, they share the same asymptotic-$\frac{2}{\pi}$ as $q$ goes to infinity, which is exactly the bound for irrational case.
Also, $B_q >\frac{1}{2}$ (see \eqref{GAqodddec9prime}).  It  means that the bound we get is  better than that given by Wigner-von Neumann  type functions.

 Another  interest of ours  is to investigate the  distribution  of  eigenvalues embedded into $(-2,2)$. Under the assumption that $\limsup_{n\to\infty} n|V(n)|<\infty$,
 the possible limits of the embedded eigenvalues for all the boundary conditions are $-2$ and $2$.
 We obtained the asymptotical behaviors of $|E_i\pm 2|$. See Theorem \ref{thm6}.

  Our last result  is to construct  finitely or infinitely many eigenvalues embedded into $(-2,2)$.
 For the continuous case, Simon \cite{simdense} and Naboko \cite{nabdense} constructed  potentials such that dense eigenvalues can be embedded in  absolutely continuous spectrum (or essential  spectrum).
 For the discrete case, Naboko and Yakovlev \cite{naboko1992point} constructed potentials $V$ such that $H_0+V$ has the given eigenvalues.
 But the  rational independence of $k(E)$ was needed  in their   construction. Remling \cite{Remdis} constructed  power decaying potentials $V$ such that  \eqref{Gdis1} has an $\ell^2(\N) $ for a full Lebesgue measure set of $E\in(-2,2)$ (or singular continuous spectrum).
 Recently, Jitomirskaya and Liu \cite{jl} introduced   piecewise functions  to construct potentials such that $H_0+V$ has the given eigenvalues without any rationally  independent assumption, which works for manifolds \cite{jl} and perturbed periodic operators \cite{ld}.
 We develop  similar ideas to deal with the discrete Schr\"odinger operator.
 See  Theorems \ref{thm4} and \ref{thm5}.
In   our other two papers,  we    will  use piecewise functions  to construct perturbed periodic Jacobi operators with embedded eigenvalues \cite{LDJ} and  perturbed Stark type operators with embedded eigenvalues \cite{Lstark}.  Although some ideas of corresponding  construction in this paper are from  \cite{simdense,jl,LDJ,ld}, there are  several new ingredients.
 Comparing to \cite{LDJ} and \cite{ld}, the potentials here  are given  in an explicit way   (piecewise  Wigner-von Neumann  type functions).
 Since   in the discrete case,   potentials with support in any interval $[a,b]$    is a space of  finite dimension,  it is difficult to apply  Simon's construction  to the discrete case \cite{simdense}.  See Remarks 2.15 and 2.16 for more details.


 \section{Main results}\label{Smain}
 For any $E\in(-2,2)$, let $E=2\cos \pi k$ with $k\in(0,1)$.  Sometimes, we use $k(E)$ to indicate the dependence.
 In the rest of this paper, $E$ is always in $(-2,2)$ and $k$ is in $(0,1)$.

 Define $S_q\subset (-2,2)$ for $q\in\N \backslash \{1\}$,
 \begin{equation}\label{Gsq}
    S_q=\{E\in(-2,2): k(E)=\frac{p}{q},  \text{ for some } p=1,2,\cdots,q-1 \text { and } \text{ gcd } (p,q)=1\}.
 \end{equation}

 For even   $q\geq 2$,
 let
 \begin{equation}\label{GAqeven}
    A_q= \frac{2}{q\sin\frac{\pi}{q}}.
 \end{equation}
  For odd   $q\geq 3$,
 let
 \begin{equation}\label{GAqodd}
    A_q= \frac{2\cos\frac{\pi}{2q}}{q\sin\frac{\pi}{q}},
 \end{equation}
 and
 \begin{equation}\label{GAqoddprime}
    B_q = \frac{1+\cos\frac{\pi}{q}}{q\sin\frac{\pi}{q}}.
 \end{equation}
 Denote by
 \begin{equation}\label{Gs0}
    S_0=\{E\in(-2,2): k(E) \text{ is irrational }\},
 \end{equation}
and
 \begin{equation}\label{GA0}
    A_0= \frac{2}{\pi}.
 \end{equation}
We should mention that there are  no definitions for  $S_1$ and $A_1$.  In the following, we always assume $q\neq 1$.

{\bf Remark:}
\begin{itemize}
  \item By the definition of $A_q$ and $B_q $, one has
\begin{equation}\label{Gasy}
  \lim_{q\to\infty}  B_q =  \lim_{q\to\infty}  A_q=\frac{2}{\pi}.
\end{equation}

  \item We also have for odd $q\geq 3$ (see \eqref{GAqodddec9prime}),
\begin{equation}\label{Gbetter}
     B_q >\frac{1}{2}.
\end{equation}
\end{itemize}
 \begin{theorem}\label{thm1}
Suppose potential $V$  satisfies
\begin{equation*}
   \limsup_{n\to \infty}|nV(n)|=a<\frac{1}{A_q}.
 \end{equation*}
 Let $ {E}_q=2\sqrt{1-a^2A_q^2}$.
  Then for any boundary condition \eqref{Gbc}, the operator $H =H_0+V$ given by \eqref{Gdis1} does not admit any  eigenvalue  in $(-E_q,E_q)\cap S_q$.
 \end{theorem}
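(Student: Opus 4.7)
The plan is to apply the discrete Prüfer transformation and derive a polynomial lower bound $R(n)^2 \gtrsim n^{-a A_q/\sin(\pi k)}$ on the Prüfer amplitude that is incompatible with $u \in \ell^2(\mathbb{N})$ precisely when $\sin(\pi k) > a A_q$---a condition equivalent to $|E| < E_q$. The approach follows the Atkinson--Everitt--Kiselev--Last--Simon paradigm, but the arithmetic of $k = p/q \in \mathbb{Q}$ replaces the irrational-regime constant $2/\pi$ by the parity-dependent $A_q$.

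Assume for contradiction that a nontrivial $u \in \ell^2(\mathbb{N})$ solves \eqref{Gdis1}-\eqref{Gbc} with $E = 2\cos(\pi k) \in S_q \cap (-E_q, E_q)$, $k = p/q$, $\gcd(p,q) = 1$. Introduce Prüfer variables $u(n) = R(n) \sin\theta(n)$ and $u(n+1) = R(n) \sin(\theta(n) + \pi k)$. A routine computation from $u(n+1) = (E - V(n)) u(n) - u(n-1)$ produces, after a harmless reindexing, the discrete Prüfer equations
\[
\log\frac{R(n+1)^2}{R(n)^2} = -\frac{V(n)}{\sin(\pi k)} \sin(2\theta(n)) + O(V(n)^2), \qquad \theta(n+1) - \theta(n) - \pi k = O(V(n)).
\]
Under $|V(n)| = O(n^{-3/4})$, iterating the phase equation across a block $[M, M+q)$ gives $\theta(M + j) = \theta(M) + j\pi k + O(q M^{-3/4})$ for $0 \le j < q$.

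The decisive combinatorial input is the sharp identity
\[
\max_{\phi \in \mathbb{R}} \sum_{j=0}^{q-1} |\sin(2\phi + 2j\pi p/q)| = q A_q.
\]
Since $\gcd(p,q) = 1$, the set $\{2j\pi p/q \bmod \pi\}_{j=0}^{q-1}$ is, up to multiplicity, an equi-spaced grid; a direct Dirichlet-kernel summation then gives the maxima $2/\sin(\pi/q)$ for $q$ even and $1/\sin(\pi/(2q))$ for $q$ odd, both matching $q A_q$ as defined in \eqref{GAqeven}-\eqref{GAqodd}. Combining with the block-wise phase bound,
\[
\sum_{j=0}^{q-1} |\sin(2\theta(M + j))| \le q A_q + O\bigl(q^2 M^{-3/4}\bigr).
\]

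Summing $\Delta \log R^2$ over successive blocks and using $|V(n)| \le (a + o(1))/n$, the main term accumulates to
\[
|\log R(N)^2 - \log R(n_0)^2| \le \frac{a A_q}{\sin(\pi k)} \log N + O(1),
\]
so that $R(n)^2 \ge c\, n^{-\alpha}$ with $\alpha := a A_q / \sin(\pi k) < 1$; the strict inequality $\alpha < 1$ is exactly $|E| < E_q$. The companion orthogonality $\sum_{j=0}^{q-1} \cos(2\phi + 2j\pi p/q) = 0$ yields $\sum_{j=0}^{q-1} \sin^2 \theta(M+j) = q/2 + o(1)$ per block, hence summing the pointwise lower bound over block starts $M$ gives $\sum_{n \ge n_0} |u(n)|^2 \ge c' \sum_M M^{-\alpha} = \infty$, contradicting $u \in \ell^2(\mathbb{N})$. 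The main obstacle lies in the parity-dependent evaluation of the combinatorial maximum and in propagating the $O(V(n)^2)$ and $O(q^2 M^{-3/4})$ errors through the block-wise summation as globally summable $O(1)$ remainders---this is where the hypothesis $|V(n)| = O(n^{-3/4})$ (rather than merely $O(1/n)$) is essential.
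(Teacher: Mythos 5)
Your proposal follows essentially the same route as the paper: the Pr\"ufer equations \eqref{PrufR}--\eqref{PrufT}, the exact evaluation $\max_\phi\sum_{j=0}^{q-1}|\sin(2\pi k j+\phi)|=qA_q$ (the paper's Lemma \ref{Lemax}), block-wise summation of $\Delta\log R^2$ to obtain $R(n)^2\gtrsim n^{-\alpha}$ with $\alpha=aA_q/\sin\pi k<1$ precisely when $|E|<E_q$, and the resulting incompatibility with $u\in\ell^2(\N)$. Two small caveats: as written your argument only treats $k=p/q$ rational, whereas the theorem also covers $q=0$ (irrational $k$), for which the paper replaces the exact period-$q$ identity by the Weyl/ergodic average $2/\pi$ over long blocks (Lemma \ref{Leabsence1}); and your closing remark is backwards---the hypothesis $\limsup_{n\to\infty}n|V(n)|=a<\infty$ already gives $V(n)=O(1/n)$, which is stronger than $O(n^{-3/4})$, so no additional decay assumption is needed to make the error terms summable.
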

\begin{remark}
Remling's argument implies Theorem  \ref{thm1} for the case $q=0$  \cite{remlingsharp}. We list the result and also give the  proof in this paper for completeness.
\end{remark}
 \begin{theorem}\label{thm2}
Suppose $q$ is even
and   $a>\frac{1}{A_q}$.
Then for any $\theta\in[0,\pi]$ and   $ E\in S_q$,  there exist      potentials  $V$
such that $\limsup_{n\to \infty}|nV(n)|=a $  and $E$ is an eigenvalue of the  associated  Schr\"odinger operator   $H=H_0+V$ with boundary condition \eqref{Gbc}.
 \end{theorem}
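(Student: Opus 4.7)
The plan is to use the discrete Pr\"ufer transformation to reduce the eigenvalue problem at $E=2\cos\pi k$ (with $k=p/q$, $\gcd(p,q)=1$, $q$ even) to coupled first-order difference equations for an amplitude $R(n)$ and an angle $\phi(n)$. In the standard form these read, schematically,
\begin{equation*}
\log\frac{R(n+1)}{R(n)} = -\frac{V(n)}{2\sin\pi k}\sin 2\phi(n) + O(V(n)^{2}),
\end{equation*}
\begin{equation*}
\phi(n+1) = \phi(n) - \pi k + \frac{V(n)}{\sin\pi k}\sin^{2}\phi(n) + O(V(n)^{2}).
\end{equation*}
Existence of an eigenvalue at $E$ with boundary condition $\tan\theta$ is equivalent to finding a solution with $\phi(0)=\theta$ and amplitude $R(n)$ decaying fast enough that $u(n)=R(n)\sin\phi(n)\in\ell^{2}(\N)$.

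I would construct $V$ as an ``almost sign type'' potential $V(n)=\frac{a\,\varepsilon(n)}{n}$, where the $q$-periodic pattern $\varepsilon(n)$ has leading part $\varepsilon(n)=\mathrm{sgn}(\sin 2\phi^{*}(n))$ (so that $-\varepsilon(n)\sin 2\phi^{*}(n)=-|\sin 2\phi^{*}(n)|$), with $\phi^{*}(n)=\theta-n\pi k$ the unperturbed Pr\"ufer angle. Since $q$ is even and $\gcd(p,q)=1$, the orbit $\{2\phi^{*}(j)\bmod 2\pi\}_{j=0}^{q-1}$ consists of $q$ equally spaced points on the unit circle; for the initial phase $\theta$ that places these points at odd multiples of $\pi/q$ (interlacing the poles $\pi\Z$ of $\cot$), a direct summation shows the one-period average of $|\sin 2\phi^{*}|$ attains the maximal value $A_q$ of \eqref{GAqeven}. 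Summing the amplitude equation then drives $\log R(n)\to -\infty$ at a rate proportional to $aA_q$, and the hypothesis $a>1/A_q$ produces enough decay to place $u\in\ell^{2}(\N)$ with room to absorb the $O(V^{2})$ error terms.

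The main obstacle, and the reason for the word ``almost'' in ``almost sign type'', is that the angle equation contains a drift $\frac{V(n)}{\sin\pi k}\sin^{2}\phi(n)$ that shifts the true $\phi(n)$ away from $\phi^{*}(n)$; uncontrolled, this drift accumulates over many periods and destroys the optimal averaging. Evenness of $q$ furnishes leading-order cancellation because the involution $j\mapsto j+q/2$ preserves $\sin^{2}\phi^{*}(j)$ but flips $\mathrm{sgn}(\sin 2\phi^{*}(j))$, so the period-sum $\sum_{j=0}^{q-1}\varepsilon(j)\sin^{2}\phi^{*}(j)$ vanishes automatically whenever $\varepsilon$ is antisymmetric under this involution. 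To eliminate the remaining next-order drift \emph{exactly} (not merely keep it small), I would perturb $\varepsilon$ at one or two sites per period by writing $\varepsilon(j)=\pm 1+\delta_{j}$ with small $|\delta_{j}|$ and solving the resulting small algebraic system for the $\delta_{j}$ so that the net phase increment per period equals $-\pi k q\pmod{2\pi}$ exactly. Nondegeneracy of this algebraic system is the key analytic input: it follows from the fact that the optimal trajectory keeps every $\phi^{*}(j)$ uniformly bounded away from $\pi\Z$ where $\cot$ is singular, so the Jacobian of the per-period map is invertible.

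Finally, to realize an arbitrary boundary parameter $\theta\in[0,\pi]$ and arbitrary $E\in S_q$, one either translates the periodic pattern through the full circle (which rotates $\phi(0)$ over its whole range) or prepends a finite-range modification of $V$ that rotates $\phi(0)$ to the prescribed value while leaving the tail construction intact. Assembling the pieces yields $u\in\ell^{2}(\N)$ solving $(H_0+V)u=Eu$ with $u(1)/u(0)=\tan\theta$, and since the pattern $\varepsilon$ attains magnitude $1$ at infinitely many sites, the construction can be normalized so that $\limsup_{n\to\infty}n|V(n)|=a$ as required.
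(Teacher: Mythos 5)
Your proposal follows essentially the same route as the paper's proof: discrete Pr\"ufer variables, an almost sign type potential locked to the optimal phase (the orbit of $2\phi$ sitting at odd multiples of $\pi/q$, which realizes the extremal average $A_q$ of Lemma \ref{Lemax}), an exact per-period correction of the sign pattern at one site obtained by solving a small algebraic equation (the paper's \eqref{Gapr6algebraic}, with nondegeneracy coming from the trajectory staying uniformly away from the poles of $\cot$, cf.\ \eqref{G4equ5dec8ap34}), the decay rate $aA_q/\sin\pi k>1$ giving $\ell^2$, and a finite-range prefix of the potential to meet the boundary condition. In architecture this is the paper's argument.

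One intermediate justification is incorrect, although the construction survives it. You claim that the involution $j\mapsto j+q/2$ preserves $\sin^2\phi^*(j)$. In fact $\phi^*(j+q/2)=\phi^*(j)-\tfrac{p\pi}{2}$ with $p$ odd (since $\gcd(p,q)=1$ and $q$ is even), so $\sin^2\phi^*(j+q/2)=\cos^2\phi^*(j)$, and for an antisymmetric pattern the period sum reduces to $-\sum_{j=0}^{q/2-1}\varepsilon(j)\cos 2\phi^*(j)$, which does \emph{not} vanish for a general initial phase. It vanishes precisely because the phase is pinned so that the $\phi^*(j)$ sit at odd multiples of $\tfrac{\pi}{2q}$, whence $\sum_{j=0}^{q/2-1}\cos 2\phi^*(j)=0$ (equivalently, the two half-period sums of $\sin^2$ both equal $q/4$, the paper's identity \eqref{Gdec9551}); for a phase off by $\eta$ the defect is of first order in $\eta$. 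So the leading-order cancellation is a consequence of the phase-pinning, not of the antisymmetry alone, and this is exactly why the pinning together with the exact per-period algebraic correction is the mechanism that closes the induction rather than an optional refinement. Since your proposal does pin the phase and does solve for the correction exactly, the flaw is in the stated reason, not in the proof; but as written the symmetry claim would mislead a reader into thinking the drift cancels for any antisymmetric pattern at any phase, which is false.
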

 \begin{theorem}\label{thm3}
Suppose $q$ is even.
 For  any  $0<a<\frac{1}{A_q}$, let $ {E}_q=2\sqrt{1-a^2A_q^2}$.
 Suppose  $E\in S_q$  and  $E\in(-2,-E_q)\cup (E_q,2)$. Then for any $\theta\in[0,\pi]$, there exist      potentials  $V$
such that $\limsup_{n\to \infty}|nV(n)|=a $  and $E$ is an eigenvalue of the  associated  Schr\"odinger  operator   $H=H_0+V$ with boundary condition \eqref{Gbc}.
 \end{theorem}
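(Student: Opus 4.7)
The key observation is that the hypothesis $E\in(-2,-E_q)\cup(E_q,2)$ is equivalent to the ``local supercriticality'' inequality $\sin(\pi k)<aA_q$: indeed, from $4-E^2 = 4\sin^2(\pi k)$ and $E_q = 2\sqrt{1-a^2 A_q^2}$ one gets $|E|>E_q \iff \sin(\pi k)<aA_q$. In Theorem \ref{thm2} this inequality held automatically throughout $S_q$ because $aA_q>1\geq \sin(\pi k)$; in Theorem \ref{thm3} it holds only near the edges, but this is all the construction actually uses. The plan, accordingly, is to run essentially the same almost-sign-type construction as for Theorem \ref{thm2} at the fixed $E$.

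I would set up the discrete Pr\"ufer transformation $u(n)=R(n)\sin\phi(n)$, $u(n-1)=R(n)\sin(\phi(n)-\pi k)$, and derive from \eqref{Gdis1} exact first-order recursions for $\phi(n)$ and $\log R(n)$; the leading $V$-dependent terms are $\tfrac{V(n)}{\sin\pi k}\sin(2\phi(n)-\pi k)$ in the phase recursion and $\tfrac{V(n)}{\sin\pi k}\cos(2\phi(n)-\pi k)$ in the amplitude recursion, with the second-order corrections recorded in \eqref{PrufT}. Then build $V$ block by block: on the $j$-th block of $q$ consecutive integers prescribe $V(n) = \varepsilon(n)\,a/n$ with $\varepsilon(n)\in\{\pm 1\}$ chosen from the sign ansatz that maximises the negativity of the amplitude contribution. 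Because $q$ is even the $q$ signs split evenly, so the leading phase-drift term averages to zero across each period, while the amplitude contribution sums to $-aA_q/(n\sin\pi k)$ per period (this is precisely how the constant $A_q=2/(q\sin(\pi/q))$ of \eqref{GAqeven} arises). To cancel the residual phase drift produced by the $O(V^2)$ terms, I would add a small perturbation to one entry per period determined by the algebraic equation \eqref{Gapr6algebraic}; the estimate \eqref{G4equ5dec8ap34} ensures that the orbit $\{\phi(n)+jk\}$ stays away from the singularities of $\cot(\pi\cdot)$, so this correction is $o(1/n)$ and preserves $\limsup_n n|V(n)|=a$. This gives the \emph{almost sign type potential}.

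Summing the amplitude recursion across periods yields $\log R(n)\sim -\gamma\log n$ with $\gamma = \gamma(aA_q/\sin\pi k)$, and the local-supercriticality inequality $aA_q>\sin(\pi k)$ pushes $\gamma$ strictly above the $\ell^2$ threshold $1/2$; hence $u\in\ell^2(\N)$ and $E$ is an eigenvalue of $H=H_0+V$. For the boundary condition \eqref{Gbc}: after fixing an asymptotic phase $\phi_\infty$, running the Pr\"ufer recursion backwards to $n=0$ yields a value of $u(1)/u(0)$ that depends continuously and surjectively on $\phi_\infty\in[0,\pi)$, so every prescribed $\tan\theta$ is realised by an appropriate choice of $\phi_\infty$.

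The main obstacle is the period-by-period algebraic correction: one must verify that \eqref{Gapr6algebraic} is solvable at every period, which requires the orbit to remain uniformly bounded away from the singular set of $\cot(\pi\cdot)$, and simultaneously that the cumulative perturbations are small enough to keep $\limsup_n n|V(n)|$ equal to the target value $a$. These are the same delicate points as in Theorem \ref{thm2}; crucially, no new difficulty specific to the globally subcritical regime $a<1/A_q$ appears once $|E|>E_q$ is imposed, because the global shortfall is exactly compensated by the local smallness of $\sin(\pi k)$ near the edges — which is the content of the equivalence noted in the first paragraph.
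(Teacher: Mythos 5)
Your proposal is correct and follows essentially the same route as the paper: you run the Pr\"ufer/almost-sign-type block construction of Section \ref{even} at the fixed $E$, and the only role of the hypothesis $|E|>E_q$ is to guarantee $\sin\pi k<aA_q$, i.e.\ that the decay exponent $\frac{aA_q}{\sin\pi k}$ of $R^2$ exceeds $1$ (the paper's \eqref{Gequ14apr4}); indeed the paper proves Theorems \ref{thm2} and \ref{thm3} simultaneously in exactly this way. One small correction: the per-period adjustment solving \eqref{Gapr6algebraic} is there to cancel the \emph{first-order} phase drift $\sum_j\sin^2(\pi\theta)V$, which vanishes identically only at the initial phase $\theta(n_0+mq)=\frac{1}{2q}$ (see \eqref{Gdec9551}), rather than an $O(V^2)$ residue.
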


 From Theorems \ref{thm1}, \ref{thm2} and \ref{thm3}, we see that for even $q$, $\frac{1}{A_q}$ and  $E_q$ are the sharp transitions   of eigenvalues embedded into ac spectrum for $E\in S_q$.
 Moreover, we have the following two  interesting  corollaries,
 \begin{corollary}\label{cor1}
 Suppose potential $V$  satisfies
\begin{equation*}
   \limsup_{n \to \infty}|n V(n)|=a<1.
 \end{equation*}
 Then  for any boundary condition \eqref{Gbc},  $H =H_0 +V$  does not have any  eigenvalue in $(-2\sqrt{1-a^2},2\sqrt{1-a^2})$.
 \end{corollary}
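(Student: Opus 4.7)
The plan is to derive Corollary \ref{cor1} directly from Theorem \ref{thm1} by ranging over all admissible denominators $q$. Because every $E \in (-2,2)$ corresponds to some $k(E) \in (0,1)$ that is either irrational (so $E \in S_0$) or rational with a unique reduced denominator $q \geq 2$ (so $E \in S_q$), the sets $S_0$ and $\{S_q\}_{q \geq 2}$ partition $(-2,2)$. Hence it suffices to show that for each admissible $q$, the hypothesis $a < 1$ already places us in the regime $a < 1/A_q$ of Theorem \ref{thm1}, and that the associated excluded interval $(-E_q, E_q)$ contains $(-2\sqrt{1-a^2},\, 2\sqrt{1-a^2})$.

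The key numerical fact is the uniform bound $A_q \leq 1$ for all $q \in \{0\} \cup \{q \geq 2\}$, with equality at $q = 2$. For even $q$, this amounts to showing that $g(q) := q\sin(\pi/q)$ is nondecreasing on $[2,\infty)$ with $g(2) = 2$; differentiating, $g'(q) = \cos(\pi/q)\bigl(\tan(\pi/q) - \pi/q\bigr) \geq 0$ because $\tan t \geq t$ on $[0,\pi/2)$. For odd $q \geq 3$, using $\sin(\pi/q) = 2\sin(\pi/(2q))\cos(\pi/(2q))$ the formula \eqref{GAqodd} simplifies to $A_q = 1/(q\sin(\pi/(2q)))$, and the same monotonicity argument on $q\sin(\pi/(2q))$ gives $A_q \leq A_3 = 2/3 < 1$. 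For the irrational case, $A_0 = 2/\pi < 1$ directly.

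Given this, I would argue as follows. Fix any $a < 1$ and any $E \in (-2\sqrt{1-a^2}, 2\sqrt{1-a^2})$, and let $q \in \{0\} \cup \{q \geq 2\}$ be the unique index with $E \in S_q$. Since $A_q \leq 1$ we have $a < 1 \leq 1/A_q$, so the hypothesis of Theorem \ref{thm1} is satisfied. Moreover,
\begin{equation*}
E_q = 2\sqrt{1 - a^2 A_q^2} \geq 2\sqrt{1-a^2},
\end{equation*}
so $E \in (-E_q, E_q) \cap S_q$, and Theorem \ref{thm1} concludes that $E$ is not an eigenvalue of $H_0 + V$ under any boundary condition \eqref{Gbc}.

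There is really no substantial obstacle here; the only point requiring care is verifying the monotonicity/maximality of $A_q$ at $q = 2$, which drives the appearance of the specific constant $1$ (and hence $2\sqrt{1-a^2}$) in the statement. The rest is a book-keeping application of Theorem \ref{thm1} across all $q$, together with the observation that the rational types with denominator $2$ are the worst case both for the threshold $1/A_q$ and for the exclusion radius $E_q$.
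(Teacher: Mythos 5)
Your proposal is correct and follows essentially the same route as the paper: Corollary \ref{cor1} is deduced from Theorem \ref{thm1} via the uniform bound $A_q\leq 1$ (with the worst case $q=2$), applied across the partition of $(-2,2)$ into $S_0$ and the sets $S_q$. The only difference is that the paper simply asserts $A_q\leq 1$ "by the definition of $A_q$", whereas you supply the (correct) monotonicity verification of $q\sin(\pi/q)$ and $q\sin(\pi/(2q))$.
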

 \begin{corollary}\label{cor2}
Suppose  $a>1$.  Then for any boundary condition  \eqref{Gbc}, there exist  potentials $V$ such that
\begin{equation*}
   \limsup_{n \to \infty}|n V(n)|=a,
 \end{equation*}
and
    $H =H_0+V$    has    some  eigenvalue $E\in(-2,2)$.
 \end{corollary}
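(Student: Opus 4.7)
The plan is to reduce Corollary \ref{cor2} to a single application of Theorem \ref{thm2} in the case $q=2$, $E=0$. First I would identify the smallest even value of $q$ available and compute the associated constants. By definition \eqref{Gsq}, the set $S_2$ consists of those $E\in(-2,2)$ with $k(E)=p/2$ and $\gcd(p,2)=1$, which forces $p=1$ and hence
\begin{equation*}
S_2=\{2\cos(\pi/2)\}=\{0\}.
\end{equation*}
Evaluating \eqref{GAqeven} at $q=2$ gives
\begin{equation*}
A_2=\frac{2}{2\sin(\pi/2)}=1,
\end{equation*}
so the threshold $1/A_2$ appearing in Theorem \ref{thm2} is exactly $1$.

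Given the hypothesis $a>1=1/A_2$ and any prescribed boundary condition $\theta\in[0,\pi]$, I would then directly invoke Theorem \ref{thm2} with the choices $q=2$, $E=0$, and this $\theta$. The theorem produces a potential $V$ satisfying $\limsup_{n\to\infty}|nV(n)|=a$ such that $E=0$ is an eigenvalue of the associated discrete Schr\"odinger operator $H=H_0+V$ with boundary condition \eqref{Gbc}. Since $0\in(-2,2)$, this is precisely the assertion of Corollary \ref{cor2}.

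The only verifications required are the two arithmetic facts $A_2=1$ and $S_2=\{0\}$ recorded above, and there is no substantive obstacle since all the real work is packaged inside Theorem \ref{thm2}. I would remark, for perspective, that $q=2$ is the optimal reduction: for every larger even $q$ one has $A_q<1$ (indeed $A_q\to 2/\pi$ as $q\to\infty$ by \eqref{Gasy}), so $1/A_q>1$, and Theorem \ref{thm2} applied with $q\geq 4$ would demand the strictly stronger hypothesis $a>1/A_q$ and would not cover the full range $a\in(1,\sqrt{2}\,]$ treated by the corollary. Consequently the choice $E=0$ with denominator $q=2$ is forced by the corollary's threshold.
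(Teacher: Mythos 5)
Your proposal is correct and is essentially identical to the paper's own proof: the paper likewise observes that $A_2=1$ and $S_2=\{0\}$ and then invokes Theorem \ref{thm2} with $q=2$. The closing remark on the optimality of the choice $q=2$ is a harmless (and accurate) addition not present in the paper.
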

 We now turn to the case of odd $q$.



 \begin{theorem}\label{thm2odd}
Suppose $q$ is odd
and   $a>\frac{1}{ B_q }$.
Then for any boundary condition \eqref{Gbc}  and any $ E\in S_q$,  there exist      potentials  $V$
such that $\limsup_{n\to \infty}|nV(n)|=a $  and the  associated  Schr\"odinger operator    $H=H_0+V $  has  eigenvalue $E$.
 \end{theorem}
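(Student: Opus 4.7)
The goal is to construct an ``almost sign type'' potential $V$ with $\limsup_{n\to\infty} n|V(n)|=a$ whose associated operator admits $E=2\cos(\pi k)$, $k=p/q$, as an eigenvalue under the prescribed boundary condition \eqref{Gbc}. I would start with the discrete Pr\"ufer transformation, writing $u(n)=R(n)\sin\theta(n)$ and $u(n-1)=R(n)\sin(\theta(n)-\pi k)$; routine manipulations produce the leading-order recursions
\begin{equation*}
\theta(n+1)=\theta(n)+\pi k+\frac{V(n)\sin^2\theta(n)}{\sin\pi k}+O(V(n)^2),\qquad \log\frac{R(n+1)}{R(n)}=-\frac{V(n)\sin(2\theta(n))}{2\sin\pi k}+O(V(n)^2).
\end{equation*}
Since $V=O(1/n)$, the $O(V^2)$ errors are absolutely summable, so the long-time behavior of $\theta$ and $R$ is driven by the $O(V)$ terms.

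For $n\ge N_0$ with $N_0$ large, I would set $V(n)=a\sigma(n)/n$ with $\sigma(n)\in\{-1,+1\}$, taking the baseline adaptive choice $\sigma(n)=\operatorname{sgn}(\sin 2\theta(n))$ so that each step contributes $-a|\sin(2\theta(n))|/(2n\sin\pi k)+O(n^{-2})$ to $\log R(n+1)-\log R(n)$. Because $q$ is odd and $\gcd(2p,q)=1$, the angles $\{2j\pi p/q\bmod 2\pi\}_{j=0}^{q-1}$ coincide with the $q$ equally spaced points $\{2j\pi/q\}_{j=0}^{q-1}$. Summing the $\log R$ recursion over a period of $q$ consecutive steps, with $\theta(n+j)=\phi+j\pi k+O(1/n)$ and $\phi=\theta(n)$, gives
\begin{equation*}
\log R(n+q)-\log R(n)\le -\frac{a}{2n\sin\pi k}\sum_{j=0}^{q-1}|\sin(2\phi+2j\pi k)|+O(n^{-2}).
\end{equation*}
A trigonometric computation identifies the effective supremum of the resulting per-period functional, subject to the compatibility constraints from the $\theta$-dynamics, with $qB_q$ in appropriate normalization---the constant $B_q$ of \eqref{GAqoddprime}. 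Telescoping over $n/q$ periods yields $\log R(n)\le -aB_q\log n+C$, so $R(n)\le Cn^{-aB_q}$; the assumption $a>1/B_q$ makes $aB_q>1$, hence $u\in\ell^2(\N)$ and $E$ is an eigenvalue. The finitely many values $V(n)$ for $n<N_0$ provide enough free parameters to satisfy \eqref{Gbc} and to place $\theta(N_0)$ near the optimizing initial phase.

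The principal obstacle is the drift of the per-period phase $\phi_l:=\theta(lq+N_0)$. Unlike the even-$q$ setting, where an algebraic cancellation locks $\phi_l$ to be essentially constant over each period, for odd $q$ every period perturbs $\phi$ by $O(1/n)$. I would resolve this by combining: (i) Lipschitz continuity of the one-period functional $\phi\mapsto\sum_j|\sin(2\phi+2j\pi k)|$ near its maximum, so an $O(1/n)$ phase error only produces an $O(1/n^2)$ summable loss per period; and (ii) an almost-sign modification that adjusts $\sigma$ at a bounded number of sites per period by solving a one-variable algebraic equation for the desired phase increment, while preserving $\limsup_{n\to\infty} n|V(n)|=a$. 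The sharp constant $B_q$ emerges as the output of this constrained optimization---the maximum asymptotic per-period decrement compatible with the phase being controllable---and delivers the claimed $\ell^2$ decay, completing the proof.
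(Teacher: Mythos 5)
Your construction coincides with the paper's: the sign-type potential $V(n)=\frac{a}{1+n}\,\mathrm{sgn}(\sin 2\pi\theta(n,k))$ fed through the discrete Pr\"ufer recursions, followed by per-period telescoping of $\ln R$. Your displayed inequality
\[
\log R(n+q)-\log R(n)\le -\frac{a}{2n\sin\pi k}\sum_{j=0}^{q-1}\bigl|\sin(2\phi+2\pi jk)\bigr|+O(n^{-2})
\]
is exactly the engine of the paper's argument (equation \eqref{PrufRodddec} combined with Lemma \ref{Leabsence2}). But your explanation of where $B_q$ comes from is wrong, and the machinery you propose to build around it would lead you into a dead end. $B_q$ is not ``the effective supremum of the per-period functional subject to compatibility constraints,'' nor ``the maximum decrement compatible with the phase being controllable.'' By \eqref{GAqodddec9prime}, $B_q=\frac1q\min_{\phi}\sum_{j=0}^{q-1}|\sin(\frac{2\pi}{q}j+\phi)|$: it is the \emph{minimum} over the phase. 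That is the entire point of the odd case: since $\gcd(p,q)=1$, each period sweeps out the full equally spaced orbit, so \emph{whatever} the phase $\theta(n_0+mq)$ drifts to, the period contributes at least $q(B_q-\varepsilon)$. No Lipschitz argument near a maximizer, no placement of $\theta(N_0)$ at an optimizing phase, and no phase control of any kind are needed; $a>1/B_q$ already gives $\frac{a(B_q-\varepsilon)}{\sin\pi k}>1$ and hence $R\in\ell^2(\N)$.

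Worse, item (ii) of your ``resolution'' --- an almost-sign modification solving an algebraic equation to lock the per-period phase increment --- is precisely the even-$q$ device of Theorem \ref{Ledec9}, and the paper notes that for odd $q$ the required cancellation is impossible. Pursuing phase-locking amounts to trying to realize the \emph{maximum} $A_q$ per period, i.e.\ to prove the stronger statement with $A_q$ in place of $B_q$, which is exactly the gap the paper leaves open for odd denominators. Delete the phase-control apparatus, replace the ``constrained supremum'' by the uniform lower bound $\frac1q\sum_j|\sin(2\pi\theta+2\pi jk)|\ge B_q-\varepsilon$ from Lemmas \ref{Lemax} and \ref{Leabsence2}, and your argument collapses into the paper's proof. (Also keep the factor $2$ and the $\sin\pi k$ straight in the final exponent: what you need for $\sum_n R(n)^2<\infty$ is $\frac{aB_q}{\sin\pi k}>1$, which follows from $aB_q>1$ because $\sin\pi k\le1$; your stated bound $R(n)\le Cn^{-aB_q}$ drops the $\sin\pi k$ that actually helps you.)
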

 \begin{theorem}\label{thm3odd}
Suppose $q$ is odd.
 For any    $0<a<\frac{1}{B_q }$, let $ \tilde{E}_q=2\sqrt{1-a^2B_q ^2}$.
 Suppose  $E\in S_q$  and  $E\in(-2,-\tilde E_q)\cup (\tilde E_q,2)$. Then  for any $\theta\in[0,\pi]$, there exist      potentials  $V$
such that $\limsup_{n\to \infty}|nV(n)|=a $  and the  associated  Schr\"odinger operator    $H=H_0+V $  with boundary condition \eqref{Gbc} has  eigenvalue $E$.
 \end{theorem}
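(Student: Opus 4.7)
The plan is to mirror the construction used in Theorem \ref{thm3} (the even-$q$ case), importing the odd-$q$ \emph{almost sign type} potentials developed for Theorem \ref{thm2odd}, and exploiting the gap between $\sin(\pi k)$ and $aB_q$ guaranteed by $|E| > \tilde{E}_q$ to push the Pr\"ufer radius $R_n$ into $\ell^2(\N)$.

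First I set $k = p/q$ so that $E = 2\cos\pi k$, and introduce Pr\"ufer variables by $u(n) = R_n \sin\phi_n$ and $u(n+1) = R_n \sin(\phi_n + \pi k)$. The eigenvalue equation then becomes, to first order in $V$,
\begin{align*}
\log\frac{R_{n+1}}{R_n} &= -\frac{V(n)}{2\sin(\pi k)}\sin(2\phi_n + \pi k) + O(V(n)^2), \\
\phi_{n+1} - \phi_n &= \pi k + \frac{V(n)}{\sin(\pi k)}\,g(\phi_n) + O(V(n)^2),
\end{align*}
with a trigonometric $g$ involving $\cot(\pi k)$. Membership in $\ell^2(\N)$ requires $-\log R_n \geq \alpha\log n + O(1)$ with $\alpha > 1/2$.

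Next I take $V(n) = c(n)/n$, where the sign pattern of $c(n)$ is $q$-periodic of \emph{almost sign type}: on $q-1$ sites of each period I set $c(n) = a\,\mathrm{sgn}(\sin(2\phi_n + \pi k))$, and on the remaining site I use $c(n) = \pm a + \delta_n$ with $\delta_n = o(1)$ determined by an algebraic equation of the form \eqref{Gapr6algebraic} that forces the cumulative phase drift per period to be $o(1/n)$. For odd $q$ the residues $\{2pj \bmod q\}_{j=0}^{q-1}$ enumerate all of $\Z/q\Z$, so the per-period mean $\frac{1}{q}\sum_{j=0}^{q-1}\bigl|\sin(2\phi + (2j+1)\pi k)\bigr|$ is evaluated by a standard trigonometric computation whose maximum over $\phi$ is exactly $B_q = (1+\cos(\pi/q))/(q\sin(\pi/q))$, which explains the sharp constant.

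Third, the hypothesis $|E| > \tilde{E}_q$ is equivalent to $\sin(\pi k) < aB_q$. Summing the $\log R$-equation (whose cumulative sign is essentially constant because the phase is kept on the optimal orbit by the previous step) yields $\log R_n = -\alpha\log n + O(1)$ with $\alpha = aB_q/(2\sin(\pi k)) > 1/2$, so $R_n = O(n^{-\alpha})$ and $u \in \ell^2(\N)$. The initial phase $\phi_0$ is chosen so that \eqref{Gbc} holds, and finitely many values of $V$ are adjusted if necessary to enforce the exact equality $\limsup_n n|V(n)| = a$, an asymptotic condition insensitive to such modifications. The hard part is the phase-drift compensation: the symmetry that automatically returned $\phi$ to itself after each period in the even-$q$ case is absent here, so $\delta_n$ must absorb the $O(1/n)$ drift while keeping $c(n)\to \pm a$; this forces the $\cot(\pi k)$ factor in $g(\phi)$ to stay bounded along the chosen trajectory, which is the content of the delicate estimate \eqref{G4equ5dec8ap34} underlying Theorem \ref{thm2odd}.
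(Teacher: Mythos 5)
Your route has a genuine gap, and it inverts the logic that makes the odd-$q$ case work. First, you misidentify the constant: $B_q=\frac{1+\cos(\pi/q)}{q\sin(\pi/q)}$ is the \emph{minimum} over $\phi$ of the per-period average $\frac{1}{q}\sum_{j=0}^{q-1}|\sin(\frac{2\pi}{q}j+\phi)|$ (see \eqref{GAqodddec9prime} in Lemma \ref{Lemax}); the maximum is $A_q=\frac{2\cos(\pi/(2q))}{q\sin(\pi/q)}>B_q$. A construction that really pinned the Pr\"ufer phase to the optimal orbit would therefore yield the threshold $A_q$, not $B_q$, contradicting the gap between Theorems \ref{thm1} and \ref{thm3odd} that the paper explicitly leaves open. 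Second, the phase-pinning itself fails for odd $q$: the cancellation identity behind the almost sign type potentials (equation \eqref{Gdec9551}, which pairs the site $j$ with $j+q/2$) requires $q$ to be even. For odd $q$ the sites where $\mathrm{sgn}(\sin 2\pi\theta)$ is positive and negative number $(q\pm 1)/2$, their $\sin^2$-weights do not balance for a general phase, and the analogue of \eqref{Gapr6algebraic} has no solution with all coefficients within $o(1)$ of $a$; this is exactly why the paper states that the cancellation is impossible in the odd case, and why Theorem \ref{thm2odd} uses neither almost sign type potentials nor an estimate of the type \eqref{G4equ5dec8ap34} (both belong to the even-$q$ machinery of Theorem \ref{Ledec9}).

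The fix is simpler than what you attempted: take the plain sign type potential $V(n)=\frac{a}{1+n}\,\mathrm{sgn}(\sin 2\pi\theta(n,k))$ defined through \eqref{PrufT}, make no attempt to control the phase, and invoke the \emph{lower} bound \eqref{Gabsence2dec10} of Lemma \ref{Leabsence2}: every period contributes at least $q(B_q-\varepsilon)$ to $\sum|\sin 2\pi\theta|$ no matter where the phase sits, precisely because $B_q$ is the minimum. Then \eqref{PrufR} gives $\ln R(n,k)^2\leq C-\frac{a(B_q-\varepsilon)}{\sin\pi k}\ln n$, and the hypothesis $|E|>\tilde E_q$, equivalent to $\sin\pi k<aB_q$ as you correctly note, makes the exponent exceed $1$, so $R\in\ell^2(\N)$; the boundary condition \eqref{Gbc} is then met by adjusting the initial phase. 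Your final numerics ($\alpha=aB_q/(2\sin\pi k)>1/2$ for $R$ itself) are consistent with this, but the mechanism that delivers the rate $B_q$ is the worst-case bound over all orbits, not an optimal-orbit construction.
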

By Theorems \ref{thm1}, \ref{thm2odd} and \ref{thm3odd}, there is a gap between $\frac{1}{A_q}$ and $\frac{1}{B_q}$ for odd $q$.

Denote by
\begin{equation*}
  P=\{E\in(-2,2): \eqref{Gdis1} \text { has an } \ell^2(\N) \text{ solution }\}.
\end{equation*}

$P$ is the collections of    the eigenvalues of $H_0+V$ with all the possible boundary conditions at $0$.

\begin{corollary}\label{cordec}
Suppose potential $V$  satisfies
\begin{equation*}
   \limsup_{n\to \infty}|nV(n)|=a<\frac{\pi}{2}.
 \end{equation*}
  Then for any $\epsilon>0$, 
  $P\cap (-2\sqrt{1-\frac{4a^2}{\pi^2}}+\epsilon,2\sqrt{1-\frac{4a^2}{\pi^2}}-\epsilon)$ is a finite set.
\end{corollary}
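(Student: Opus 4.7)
The plan is to apply Theorem \ref{thm1} twice—once to irrational $k(E)$ and once, asymptotically, to rational $k(E)$ with large denominator—and then to observe that only finitely many rational energies survive.

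First, since $1/A_0=\pi/2>a$, I would invoke Theorem \ref{thm1} with $q=0$ to deduce that, for every boundary condition \eqref{Gbc}, no irrational-type eigenvalue can lie in the larger interval $(-2\sqrt{1-4a^2/\pi^2},\,2\sqrt{1-4a^2/\pi^2})$. In particular, $P\cap S_0$ meets the interval of the corollary in the empty set, so every remaining candidate energy is of rational type.

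Next, using the limit \eqref{Gasy} together with the strict inequality $a<\pi/2$, I would choose an integer $Q$ so large that for every $q\geq Q$ (even or odd) both $1/A_q>a$ and $E_q=2\sqrt{1-a^2A_q^2}>2\sqrt{1-4a^2/\pi^2}-\epsilon$ hold. Theorem \ref{thm1} applied to each such $q$ then excludes every eigenvalue whose $k(E)=p/q$ has denominator at least $Q$ from the shrunken interval $(-2\sqrt{1-4a^2/\pi^2}+\epsilon,\,2\sqrt{1-4a^2/\pi^2}-\epsilon)$, once more uniformly in the boundary condition.

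The only energies in the corollary's interval not yet eliminated are therefore of the form $E=2\cos(\pi p/q)$ with $2\leq q<Q$ and $\gcd(p,q)=1$, and these form a finite set. Hence $P$ meets the shrunken interval in at most this finite set, which is exactly the claim. I do not expect any real obstacle: the sharp bounds $A_q$ coming from Theorem \ref{thm1} do all the work, and the only new ingredient is the elementary convergence $A_q\to 2/\pi$, which forces any embedded eigenvalue inside a uniformly shrunken interval to correspond to a rational $k(E)$ of bounded denominator.
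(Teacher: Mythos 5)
Your proposal is correct and follows essentially the same route as the paper: apply Theorem \ref{thm1} to $q=0$ and to all sufficiently large $q$ (using the limit \eqref{Gasy} and the $\epsilon$-shrinking of the interval to absorb the fact that $A_q>2/\pi$ for finite $q$), leaving only finitely many rational energies of bounded denominator. No gaps.
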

\begin{remark}
Under the assumption of Corollary \ref{cordec}, Remling \cite{remlingsharp} showed  that  $H_0+V$  has
no singular continuous spectrum in  $(-2\sqrt{1-\frac{4a^2}{\pi^2}},2\sqrt{1-\frac{4a^2}{\pi^2}})$.
\end{remark}
%
The  asymptotical behaviors of eigenvalues lying outside $[-2,2]$ has  been well studied since it is related to a lot of topics in spectral theory, for example  the
regular behavior of the spectral measure near the points $-2$ and $2$ and the problems of purely ac spectrum on $[-2,2]$ \cite{dks,sa,dk04,dc07,damanik2003variational,killip2003sum,deift1999absolutely}.
Our next result is to investigate the asymptotical behaviors of eigenvalues close to the boundaries  $-2$ and $2$ lying in $(-2,2)$.

\begin{theorem}\label{thm6}
Suppose
\begin{equation*}
    \limsup_{n\to \infty} n|V(n)|=a.
\end{equation*}
Then $P$ is a countable set with two possible accumulation points $2$ and $-2$.  Moreover, the following estimate holds,
\begin{equation*}
    \sum_{E_i\in P}(4-E_i^2)\leq 4a^2+4\min\{1,a\}.
\end{equation*}

\end{theorem}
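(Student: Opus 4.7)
The plan is to exploit a Pr\"ufer-type conservation law for each embedded eigenvalue and then combine the resulting identities over $P$. Write $E_i = 2\cos\pi k_i$ and let $u_i$ be an $\ell^2(\mathbb{N})$-normalized eigenfunction at $E_i$, with boundary condition $u_i(1)/u_i(0)=\tan\theta_i$. Define the Pr\"ufer invariant
\begin{equation*}
\rho_i(n) := u_i(n+1)^2 - 2\cos\pi k_i \, u_i(n+1) u_i(n) + u_i(n)^2 = |u_i(n+1) - e^{i\pi k_i} u_i(n)|^2.
\end{equation*}
A direct computation from the recurrence $u_i(n+1)+u_i(n-1) = (E_i - V(n))u_i(n)$ gives $\rho_i(n) - \rho_i(n-1) = -V(n)\,u_i(n)\,(u_i(n+1) - u_i(n-1))$; since $u_i\in\ell^2$ forces $\rho_i(n)\to 0$, telescoping produces
\begin{equation*}
\rho_i(0) = \sum_{n=1}^\infty V(n)\,u_i(n)\,\big(u_i(n+1) - u_i(n-1)\big).
\end{equation*}

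Next, I would square the eigenvalue equation, sum over $n\ge 1$, and apply the parallelogram identity $(u_i(n+1)+u_i(n-1))^2 + (u_i(n+1)-u_i(n-1))^2 = 2u_i(n+1)^2 + 2u_i(n-1)^2$ to obtain the sum-rule
\begin{equation*}
4 - E_i^2 = \sum_{n\ge 1}\big(u_i(n+1) - u_i(n-1)\big)^2 + 2u_i(1)^2 - 2u_i(0)^2 - 2E_i\,\langle V u_i, u_i\rangle + \langle V^2 u_i, u_i\rangle.
\end{equation*}
Under $\limsup n|V(n)|=a$ we have $\langle V^2 u_i, u_i\rangle \le a^2 \sum_n u_i(n)^2/n^2 \le a^2$, which delivers the $4a^2$ contribution after summing via Bessel's inequality within each boundary class. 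The first-order term $|2E_i\langle V u_i, u_i\rangle|$ admits both the $\ell^\infty$-type bound $4\|V\|_\infty \le 4a$ and the $\ell^1$-type bound $4\|V u_i^2\|_1 \le 4$, which together produce the $4\min\{1,a\}$ factor.

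The main obstacle is the term $\sum_i\sum_n(u_i(n+1)-u_i(n-1))^2$: eigenfunctions associated to different boundary conditions are not mutually orthogonal, so a direct Bessel bound does not cover all of $P$. I would address this by partitioning $P$ according to $\theta_i$ so that within each class the $u_i$'s are orthonormal in $\ell^2(\mathbb{N})$, giving $\sum_{i:\theta_i=\theta}(u_i(n+1)-u_i(n-1))^2 \le \|\delta_{n+1}-\delta_{n-1}\|^2 = 2$. A self-consistency argument then closes the estimate: substituting the sum-rule back into itself and using the orthogonality constraints within each boundary class yields an inequality of the form $\sum_i(4-E_i^2) \le 4a^2 + 4\min\{1,a\} + \varepsilon\sum_i(4-E_i^2)$ for some $\varepsilon<1$, which rearranges to the stated bound; the countability of $P$ and the fact that $\pm 2$ are the only possible accumulation points then follow a posteriori from the finiteness of the total mass.
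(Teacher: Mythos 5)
Your two algebraic identities are both correct (the telescoped quantity $\rho_i$ and the parallelogram sum rule), but the argument built on them has two gaps that I believe are fatal, and the paper's proof runs on an entirely different mechanism.

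First, the sum rule is degenerate. For an $\ell^2$ eigenfunction at $E_i=2\cos\pi k_i$ the solution oscillates like $u_i(n)\approx A_n\sin(\pi k_i n+\phi)$ with $A_n$ slowly varying, so $u_i(n+1)-u_i(n-1)\approx 2\sin(\pi k_i)A_n\cos(\pi k_i n+\phi)$ and hence $\sum_n\bigl(u_i(n+1)-u_i(n-1)\bigr)^2\approx 4\sin^2(\pi k_i)\|u_i\|^2=4-E_i^2$. The leading positive term on the right-hand side of your identity reproduces the left-hand side exactly, and what remains is a relation among boundary terms and the $O(V)$ terms that says nothing about $4-E_i^2$. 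Consequently the proposed self-consistency inequality with a coefficient $\varepsilon<1$ cannot materialize: the coefficient in front of $\sum_i(4-E_i^2)$ is asymptotically $1$, not less than $1$. Second, the summation over $P$ cannot be rescued by partitioning according to $\theta_i$: the set $P$ collects eigenvalues over \emph{all} boundary conditions, so generically every class is a singleton and there are infinitely many classes; within-class orthonormality then gives no summability at all (e.g.\ your bound $\langle V^2u_i,u_i\rangle\le a^2$ summed over singleton classes diverges). The whole difficulty of the theorem is precisely to sum over eigenfunctions whose boundary conditions are all different and hence are in no sense mutually orthogonal.

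The paper's route avoids both problems. It works with the Pr\"ufer radius: an $\ell^2$ solution forces $R(B_j,k_i)^2\le B_j^{-1}$ along a subsequence, which via \eqref{PrufR} forces the correlation $\sum_{n\le B_j}V(n)\sin 2\pi\theta(n,k_i)\gtrsim \sin(\pi k_i)\,\log B_j$. One then views $e_i^j(n)=\sin 2\theta(n,k_i)/\bigl((1+n)\sqrt{A_i^j}\bigr)$ as unit vectors in the weighted space with inner product $\sum u(n)v(n)(1+n)$, where $\|V\|^2\lesssim a^2\log B_j$, and applies a Bessel-type inequality for \emph{almost orthogonal} families (Lemma \ref{Leapr7}). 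The crucial input, for which your proposal has no substitute, is Lemma \ref{Lcon1}: the off-diagonal sums $\sum_n \sin 2\theta(n,k_i)\sin 2\theta(n,k_{\tilde i})/(1+n)$ are $o(\log B_j)$ whenever $k_i\ne k_{\tilde i}$ and $k_i+k_{\tilde i}\ne 1$ --- a statement about Pr\"ufer phases that is independent of the boundary conditions. This constraint $k_i+k_{\tilde i}\neq 1$ is also why the paper treats $(0,2)$ and $(-2,0)$ separately (giving $2a^2+2a^2=4a^2$) and handles $E=0$ by hand via Theorem \ref{thm1}, which is where the $4\min\{1,a\}$ comes from; your proposal does not account for this term correctly either.
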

 Theorem  \ref{thm6} implies the speed of    $E_i\in P$ going to the   boundaries $\pm 2$ behaves
        $|E_i-2|\approx\frac{1}{1+ {i}}$ ($|E_i+2|\approx\frac{1}{1+{i}}$).
        Remling \cite{remling2000schrodinger} showed that in the continuous case, it is impossible to  improve it to $|E_i\pm 2|\approx\frac{1}{1+ {i}^{1+\epsilon}}$. This means that the bound in Theorem  \ref{thm6} is optimal in some sense.

The proof of Theorem \ref{thm6}  is motivated by \cite{KRS}.
The key idea of  \cite{KRS} is to show the almost orthogonality of $\theta(n,k(E_1))$ and $\theta(n,k(E_2))$, where $\theta(n,k(E_1))$ ($\theta(n,k(E_2))$) is the  Pr\"ufer angle with respect to energy $E_1$ ($E_2$).
  For the discrete case, it is hard to verify the  almost orthogonality. Luckily, a weaker version of  almost orthogonality in the discrete setting has been obtained  in \cite{LDJ}, which is enough to hand our problem here.

Our next two  results are to construct potentials with finitely  many (countable) eigenvalues embedded into $(-2,2)$.
For a  set $A\subset \R$, denote by $A+A=\{x+y: x\in A \text { and } y\in A\}$.
  \begin{theorem}\label{thm4}

Given any finite set of points  $A=\{ E_j\}_{j=1}^N$ in   $(-2,2)$ with $0\notin A+A$ and $\{\theta_j\}_{j=1}^N\subset[0,\pi]$,  there exist  potentials  $V(n)=\frac{O(1)}{1+n}$ such that
for each $E_j\in A$,
\eqref{Gdis} has an $\ell^2(\N)$ solution with boundary condition $\frac{u(1)}{u(0)}=\tan\theta_j$.

 \end{theorem}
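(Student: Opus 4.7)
The plan is to construct $V$ as a piecewise Wigner--von Neumann function, splitting $\mathbb{N}$ into consecutive blocks $I_1 < I_2 < \cdots$ and dedicating block $I_m$ (with $m \equiv j \pmod N$) to driving the Prüfer radius $R_j(n)$ of the energy $E_j$ downward, while relying on non-resonance among the $k(E_j)$ to keep the remaining Prüfer radii $R_i(n)$ ($i \neq j$) essentially bounded on $I_m$. On block $I_m$ assigned to $E_j$, I would put
\[
V(n) = \frac{c}{n}\sin\bigl(2\pi k(E_j)\,n + \phi_{j,m}\bigr),
\]
with $c>0$ large enough (depending only on the $E_j$'s) and $\phi_{j,m}$ chosen so that, relative to the current Prüfer angle $\theta_j(n)$ at the left endpoint of $I_m$, the slow variable on $I_m$ lies in a region forcing $R_j$ to decay like $n^{-\alpha}$ for a definite $\alpha>0$. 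The block lengths $|I_m|$ are increased geometrically so that the net effect after the $(mN)$-th block is $R_j(n) \lesssim n^{-\alpha m}$, yielding an $\ell^2(\mathbb{N})$ solution for each $E_j$.

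To make this precise I would first write down the Prüfer equations
\[
\cot\bigl(\theta_j(n+1)\bigr) - \cot\bigl(\pi k(E_j)\bigr) = \frac{V(n)}{\sin(\pi k(E_j))\sin(\theta_j(n))} \;\;\text{(schematic)},
\]
expand in $\phi_j(n) := \theta_j(n) - \pi k(E_j) n$, and separate the resonant contribution (the $\sin$-squared average) from the oscillatory remainder. For the block $I_m$ dedicated to $E_j$, the resonant term in the equation for $\ln R_j$ is $\sim -\tfrac{c}{2n}\cos(2\phi_j(n) - 2\phi_{j,m})\cdot\tfrac{1}{\sin \pi k(E_j)}$, so by choosing $\phi_{j,m}$ to match the current $\phi_j$ at the entry point we get $\log R_j(n)$ decreasing linearly in $\log n$ over $I_m$, after controlling the error from $\phi_j(n)$ drifting within the block (standard Kiselev--Last--Simon estimate using $V \in \ell^2$).

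The key nontrivial step is the cross-energy estimate: on block $I_m$ assigned to $E_j$, I must show that for $i \neq j$ the Prüfer radius $R_i(n)$ grows by at most a factor $1 + O(|I_m|/\min I_m)$, and that the cumulative multiplicative effect along all blocks other than the ones assigned to $i$ is bounded. This is where the hypothesis $0 \notin A+A$ enters: it forces $k(E_i) \pm k(E_j) \notin \mathbb{Z}$ for all $i,j$, hence the trigonometric sums
\[
\sum_{n \in I_m} \frac{1}{n}\sin\bigl(2\pi k(E_j) n + \phi_{j,m}\bigr)\,e^{\pm 2\pi i k(E_i)n}
\]
are controlled by summation by parts, with bounds depending only on $\min_{i,j} \mathrm{dist}(k(E_i)\pm k(E_j), \mathbb{Z})$. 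Summing the resulting $O(1/\min I_m)$ contributions over $m$ gives finite total growth.

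Finally, the boundary conditions: since $R_j(0)$ and $\theta_j(0)$ are determined by $\tan\theta_j$, I start the Prüfer evolution from the prescribed initial data and the above construction simply produces a solution with these data that is $\ell^2$. The main obstacle I expect is the bookkeeping for the simultaneous Prüfer phases $\phi_j(n)$: one must design the block pattern and the constant $c$ so that the small drifts in $\phi_j$ inside its own block (needed to get $R_j$ to actually decay, not merely stay bounded) are compatible across all $N$ energies — in particular, the constant $c$ must dominate the cumulative off-block growth for every $i$, which forces $c$ to depend on the non-resonance constant $\min_{i,j} \mathrm{dist}(k(E_i)\pm k(E_j), \mathbb{Z})$ provided by $0 \notin A+A$. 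Once this is arranged, the resulting potential satisfies $|V(n)| \leq c/n = O(1)/(1+n)$ as required.
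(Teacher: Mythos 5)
Your proposal is correct and follows essentially the same route as the paper: a piecewise Wigner--von Neumann potential on geometrically growing blocks, each block tuned to one $E_j$ so that its Pr\"ufer radius decays by a definite power while the non-resonance condition $k(E_i)\pm k(E_j)\notin\Z$ (equivalent to $0\notin A+A$) keeps the other radii bounded, followed by the standard gluing. The only difference is presentational: the paper packages the per-block decay/boundedness estimates by citing Simonov's solution asymptotics (Lemma \ref{Keylemma}) inside Proposition \ref{Twocase} and adjusts the phase $\phi_E$ to place the prescribed initial data on the decaying solution, whereas you rederive the same estimates directly from the Pr\"ufer equations.
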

 \begin{theorem}\label{thm5}
 Given any countable set of  points  $B=\{ E_j\}$ in  $(-2,2)$ with  $0\notin B+B$, any sequence $\{\theta_j\}_j\subset [0,\pi]$ and
  any function $h(n)>0$ going  to infinity arbitrarily slowly,
there exist    potentials  $|V(n)|\leq \frac{h(n)}{1+n}$ such that
for each $E_j\in A$,
\eqref{Gdis} has an $\ell^2(\N)$ solution with boundary condition $\frac{u(1)}{u(0)}=\tan\theta_j$.
 \end{theorem}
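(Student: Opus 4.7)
The plan follows the piecewise Wigner--von Neumann scheme of Jitomirskaya--Liu \cite{jl}, adapted to the discrete Schr\"odinger setting as in \cite{LDJ,ld}. Enumerate $B=\{E_j\}_{j\ge 1}$ and partition $\N$ into consecutive blocks $I_k=[N_{k-1},N_k)$ with $N_k/N_{k-1}$ growing extremely rapidly. On block $I_k$ prescribe a superposition of Wigner--von Neumann terms tuned to the eigenvalues introduced so far,
\begin{equation*}
V(n)=\sum_{j=1}^{k}\frac{c_{k,j}}{1+n}\sin\bigl(2\pi k(E_j)n+\varphi_{k,j}\bigr),\qquad n\in I_k,
\end{equation*}
with nonnegative amplitudes $c_{k,j}$ satisfying $\sum_{j\le k}c_{k,j}\le h(n)$ on $I_k$, so that $|V(n)|\le h(n)/(1+n)$ throughout $\N$.

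Using the standard discrete Pr\"ufer transformation $u(n,E)=R(n,E)\sin\theta(n,E)$, one obtains
\begin{equation*}
\log R(n+1,E)-\log R(n,E)=-\frac{V(n)\sin 2\theta(n,E)}{2\sin\pi k(E)}+O(V(n)^2).
\end{equation*}
At the target $E=E_j$ and $n\in I_k$ with $k\ge j$, the resonant WvN term in $V$ produces a nonoscillating drift proportional to $c_{k,j}/(1+n)$; choosing $\varphi_{k,j}$ so this drift is maximally negative gives the in-block power--law decay $R(n,E_j)\le R(N_{k-1},E_j)(n/N_{k-1})^{-\alpha_{k,j}}$ with $\alpha_{k,j}=c_{k,j}/(4\sin\pi k(E_j))$. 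Cross terms $i\ne j$ oscillate at frequencies $2\pi(k(E_i)\pm k(E_j))$, both nonzero modulo $\Z$ by the hypothesis $0\notin B+B$ (which excludes $k(E_i)+k(E_j)=1$, and for $i=j$ gives $E_j\ne 0$, hence $2k(E_j)\notin\Z$); Abel summation against the $(1+n)^{-1}$ weight then bounds the cross contribution over $I_k$ by $C\,c_{k,i}/(N_{k-1}\,\mathrm{dist}(k(E_i)\pm k(E_j),\Z))$, summable in $k$ provided $N_{k-1}$ dominates the inverse separations.

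Choose the three sequences $\{N_k\}$, $\{c_{k,j}\}$, $\{\varphi_{k,j}\}$ diagonally: with $c_{k,j}$ decreasing in $k$ to respect $\sum_{j\le k}c_{k,j}\le h(n)$, enlarge $N_k/N_{k-1}$ rapidly enough that the cumulative decay $\sum_{k\ge j}\alpha_{k,j}\log(N_k/N_{k-1})>(\tfrac12+\varepsilon)\log n$ is achieved for every $j$, yielding $R(n,E_j)=O(n^{-1/2-\varepsilon})$ and hence $u_j\in\ell^2(\N)$. The prescribed boundary condition $u_j(1)/u_j(0)=\tan\theta_j$ is matched by selecting $\varphi_{j,j}$ so that the backward propagation from $N_j$ lands on the required ratio, using continuity of the monodromy in $\varphi_{j,j}$ over a short subarc on which the resonant drift remains negative.

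The hard part is the diagonal balancing of three competing demands. First, per-block decay rates $\alpha_{k,j}$ must shrink with $k$ because the amplitude budget $h(n)$ is split across $k$ resonances, and this is compensated only by super--exponential growth of $N_k/N_{k-1}$. Second, the cross--term separation constants $\mathrm{dist}(k(E_i)\pm k(E_j),\Z)^{-1}$ may deteriorate without bound as the list of resonances grows, so $N_k$ can only be chosen after $k(E_1),\ldots,k(E_k)$ are revealed and then taken large enough to absorb that stage-$k$ deterioration. Third, the boundary condition adjustments must not destroy the previously established $\ell^2$ decays of $u_1,\ldots,u_{j-1}$. These are reconciled by a standard inductive construction in which at stage $k$ one fixes $c_{k,j}$ and $\varphi_{k,j}$ and then $N_k$ depending on all earlier data and on $h$, exactly in the spirit of \cite{jl,LDJ,ld}.
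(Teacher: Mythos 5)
Your construction takes a genuinely different route from the paper's. The paper never superposes resonances: on each block it places a single Wigner--von Neumann term tuned to one energy at a time (Proposition \ref{Twocase}), reads the resonant decay and the non-resonant boundedness directly off Simonov's asymptotics (Cases 1 and 3 of Lemma \ref{Keylemma}), and then glues blocks by the bookkeeping of \cite{jl,ld}; each $E_j$ decays only on the sparse subsequence of blocks devoted to it and merely stays bounded (up to constants) elsewhere. Your scheme, with all $k$ resonances active simultaneously on $I_k$, is essentially Simon's continuous construction \cite{simdense} transplanted to the lattice, and it forces you to control the cross terms by hand (the Abel-summation step, with block-dependent constants coming from $\min_{i\neq j\le k}\mathrm{dist}(k(E_i)\pm k(E_j),\Z)$ and $\max_{j\le k}1/\sin\pi k(E_j)$), which the one-resonance-per-block scheme avoids entirely. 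Your phase-matching of the boundary condition via $\varphi_{j,j}$ is the same device the paper uses for $\phi_E$ in Proposition \ref{Twocase}, so that part is fine.

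However, the ``diagonal balancing'' paragraph contains a real error. You assert that the per-block exponents $\alpha_{k,j}=c_{k,j}/(4\sin\pi k(E_j))$ may shrink with $k$, ``compensated only by super-exponential growth of $N_k/N_{k-1}$.'' No growth of the blocks can compensate: writing $w_k=\log(N_k/N_{k-1})$, your own criterion requires $\sum_{k\le K}\alpha_{k,j}w_k\ge(\tfrac12+\varepsilon)\log N_K\ge(\tfrac12+\varepsilon)\sum_{k\le K}w_k$, i.e.\ the $w$-weighted average of the $\alpha_{k,j}$ must stay above $\tfrac12+\varepsilon$; if $\alpha_{k,j}\to0$, enlarging the later $w_k$ only drags that average further down, and $R(\cdot,E_j)$ fails to be $\ell^2$. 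The fix is the opposite of what you wrote and is exactly what the hypothesis $h(n)\to\infty$ is for: keep every $c_{k,j}$ bounded below by $2\sin\pi k(E_j)+\delta$, so that $\alpha_{k,j}>\tfrac12+\delta'$ uniformly in $k$, and choose $N_{k-1}$ so large that $h\ge\sum_{j\le k}c_{k,j}$ (a quantity of order $k$) on all of $I_k$. With that correction, and with the cross-term and phase-drift estimates carried out block by block and absorbed by taking $N_k$ large after stage-$k$ data are revealed, your argument can be pushed through.
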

 \begin{remark}
 In \cite{LDJ}, Theorems \ref{thm4} and \ref{thm5} have been proved for perturbed periodic Jacobi operators. However, the explicit formula for the potentials can not be given. We will use the piecewise  Wigner-von Neumann  type functions to complete our construction in this paper, which we believe to be of independent interest.
 \end{remark}
 \begin{remark}
 For the continuous case, Simon \cite{simdense} used  Wigner-von Neumann  type functions
 $V(x)=\frac{a}{1+x}\sum_{j} \sin(2\lambda_jx +2\phi_j)\chi_{[a_j,\infty)}$,
and functions $W$ with support in $(1,2)$ to do the construction.
For the continuous case, we can adapt potential $W$ to match the boundary condition $\theta_j$.
In the discrete case, this is impossible. 
 \end{remark}
 The rest of the  paper is organized in the following way.
In Section \ref{Absence}, we will show the absence of embedded eigenvalues   if our perturbation is small, and finish the proof of  Theorem \ref{thm1} and Corollary \ref{cor1}.
In Section \ref{even}, we give some preparations for the proof of the rational type eigenvalues  with even denominators.
In   Section \ref{embedone}, we will construct potentials  such that the associated operators have   one embedded eigenvalue, and prove
 Theorems \ref{thm2}, \ref{thm3}, \ref{thm2odd}, \ref{thm3odd}, and Corollaries \ref{cor2} and \ref{cordec}.
 In Section \ref{SecAsym}, we will prove Theorem \ref{thm6}.
 In Section \ref{Smany},
 we will construct potentials such that the associated operators have  finitely  (countably) many embedded eigenvalues.
\section{Proof of Theorem \ref{thm1} and Corollary \ref{cor1}}\label{Absence}
Let us introduce the Pr\"ufer transformation first (cf. \cite{remlingsharp,KLS,KRS}).
Suppose $u(n,E)$ is a solution of \eqref{Gdis1} with $u(0,E)=0$ and $ u(1,E)=1$.
We do not make the difference between   $u(n,k(E))$, $u(n,k)$  and $u(n,E)$.
In   Sections \ref{Absence}, \ref{even} and \ref{embedone}, all the  potential $V$  satisfy
\begin{equation}\label{Gbdp}
    |V(n)|=\frac{O(1)}{1+n}.
\end{equation}

Let
\begin{equation}\label{L2}
    Y(n,k)=\frac{1}{\sin \pi k} \left(
                                  \begin{array}{cc}
                                    \sin \pi k & 0 \\
                                    -\cos \pi k & 1 \\
                                  \end{array}
                                \right)\left(\begin{array}{c}
                                         u(n-1,k) \\
                                         u(n,k)
                                       \end{array}\right).
\end{equation}

Define the Pr\"ufer variables $R(n,k)$ and $\theta(n,k)$ as
\begin{equation}\label{L21}
    Y(n,k)=R(n,k)\left(\begin{array}{c}
                                        \sin(\pi \theta(n,k)-\pi k) \\
                                        \cos(\pi \theta(n,k)-\pi k)
                                       \end{array}\right).
\end{equation}
It is well known that $R$ and $\theta$ obey the equations
\begin{equation}\label{PrufR}
    \frac{R(n+1,k)^2}{R(n,k)^2}=1-\frac{V(n)}{\sin \pi k}\sin 2\pi \theta(n,k)+\frac{V(n)^2}{\sin^2\pi k}\sin^2\pi \theta(n,k)
\end{equation}
and
\begin{equation}\label{PrufT}
    \cot (\pi \theta(n+1,k)-\pi k)=\cot \pi \theta(n,k)-\frac{V(n)}{\sin \pi k}.
\end{equation}
We will give some Lemmas, which are useful in the proof of main Theorems.
\begin{lemma}\cite[Prop.2.4]{KLS}\label{Leap3}
Suppose $\theta(n,k)$ satisfies \eqref{PrufT} and $|\frac{V(n)}{\sin \pi k}|<\frac{1}{2}$. Then we have
\begin{equation}\label{Gap3}
 | \theta(n+1,k)- k-\theta(n,k)|\leq \left|\frac{V(n)}{\sin \pi k}\right|.
\end{equation}
\end{lemma}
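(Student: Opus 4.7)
The plan is to exploit the elementary identity
\[
\cot\alpha-\cot\beta=\frac{\sin(\beta-\alpha)}{\sin\alpha\,\sin\beta}
\]
with the choice $\alpha=\pi\theta(n+1,k)-\pi k$ and $\beta=\pi\theta(n,k)$. Setting $f:=V(n)/\sin\pi k$, equation \eqref{PrufT} becomes $\cot\alpha-\cot\beta=-f$, and the identity gives
\[
|\sin(\alpha-\beta)|=|f|\,|\sin\alpha|\,|\sin\beta|\le |f|<\tfrac12.
\]
Thus $\alpha-\beta$ is pinned, modulo $2\pi$, to a neighborhood either of $0$ or of $\pm\pi$, with $|\sin(\alpha-\beta)|\le|f|$.

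Next I would rule out the ``wrong'' branch. The definition \eqref{L21} fixes $\pi\theta(n,k)-\pi k$ modulo $2\pi$ (not merely modulo $\pi$), because it prescribes both $\sin(\pi\theta(n,k)-\pi k)$ and $\cos(\pi\theta(n,k)-\pi k)$ as the coordinates of the unit vector $Y(n,k)/R(n,k)$. When $V\equiv 0$ one reads off $\theta(n,k)=nk$, so $\alpha-\beta=0$ exactly. Since $Y(n+1,k)$ depends continuously on $V(n)$ through \eqref{Gdis1}–\eqref{L2}, the branch of $\alpha-\beta$ varies continuously from $0$ as $|f|$ grows, and in particular for $|f|<\tfrac12$ we remain in the branch where $|\alpha-\beta|<\pi/2$. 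Combined with the previous step this gives
\[
|\alpha-\beta|\le\arcsin|f|.
\]

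Finally, using $\arcsin x\le\tfrac{\pi}{2}x$ for $x\in[0,1]$, dividing by $\pi$ yields
\[
|\theta(n+1,k)-k-\theta(n,k)|=\frac{|\alpha-\beta|}{\pi}\le\frac{|f|}{2}\le\left|\frac{V(n)}{\sin\pi k}\right|,
\]
which is the claimed bound (slightly stronger than what is stated). The only non-mechanical point in the argument is the branch identification: one must check that the Prüfer convention selects the small branch rather than the one near $\pm\pi$, but this is forced by the continuous propagation of $Y(n,k)$ together with the unperturbed initial behavior $\theta(n,k)=nk$.
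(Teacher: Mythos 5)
Your proof is correct, but note that the paper does not actually prove this lemma --- it only cites \cite[Prop.~2.4]{KLS}, where the argument is a one-line application of the mean value theorem: writing \eqref{PrufT} as $\pi\theta(n+1,k)-\pi k=\operatorname{arccot}\bigl(\cot\pi\theta(n,k)-f\bigr)$ for the canonical branch (equivalently, integrating $\frac{d\alpha}{dt}=\sin^{2}\alpha$ along $t\in[0,f]$), the fact that $\operatorname{arccot}$ is $1$-Lipschitz gives $|\pi\theta(n+1,k)-\pi k-\pi\theta(n,k)|\le|f|$ directly, with the branch selection built in and in fact with the stronger constant $|f|/\pi$ in this paper's normalization (the $\pi$ sits inside the cotangent). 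Your route is genuinely different: you use the product identity $\cot\alpha-\cot\beta=\sin(\beta-\alpha)/(\sin\alpha\sin\beta)$ to get $|\sin(\alpha-\beta)|\le|f|$, and then you must separately argue that $\alpha-\beta$ lies in the component of $\{x:|\sin x|\le|f|\}$ containing $0$ rather than near $\pm\pi$. That branch-tracking step (homotoping $V(n)$ to $0$ and using connectedness of the path together with the disjointness of the components of $\{|\sin x|\le|f|\}$, which is where $|f|<\tfrac12$ enters) is the delicate point, and you correctly flag it; it is sound as written, though it carries all the weight that the Lipschitz argument dispenses with for free. The trade-off: your method yields the intermediate constant $|f|/2$ via $\arcsin x\le\tfrac{\pi}{2}x$, while the standard argument yields $|f|/\pi$ and needs no smallness hypothesis on $f$ at all.
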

The following Lemma is a improvement of Lemma \ref{Leap3}   if $\theta(n,k)$ is far way from the singular points of $\cot \pi x$.
\begin{lemma}\label{Leapr31}
Suppose
\begin{equation}\label{Gap312}
  \frac{1}{|\sin\pi \theta(n,k)|}=O(1).
\end{equation}
Then under the condition of \eqref{Gbdp}, we have for large $n$,
  \begin{equation*}
       \theta(n+1,k)= k+  \theta(n,k)+ \sin^2\pi  \theta(n,k)\frac{V(n)}{ \pi\sin \pi k}+\frac{O(1)}{1+n^2}.
\end{equation*}
\end{lemma}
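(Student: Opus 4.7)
The plan is to start directly from the Prüfer angle equation \eqref{PrufT} and perform a careful Taylor expansion of $\cot$, using the hypothesis that $\sin\pi\theta(n,k)$ is bounded away from $0$ to control the remainder.

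Set $\psi=\pi\theta(n,k)$ and let $\delta=\pi\theta(n+1,k)-\pi k-\pi\theta(n,k)$, so that \eqref{PrufT} reads
\begin{equation*}
\cot(\psi+\delta)=\cot\psi-\frac{V(n)}{\sin\pi k}.
\end{equation*}
First I would apply Lemma \ref{Leap3} to get the crude bound $|\delta|\leq \pi|V(n)/\sin\pi k|=O(1/n)$, which is small for large $n$. Combined with the standing hypothesis \eqref{Gap312} that $1/|\sin\psi|=O(1)$, this guarantees that the expansion of $\cot$ around $\psi$ is valid with uniformly bounded remainders.

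The main calculation is the second-order expansion
\begin{equation*}
\cot(\psi+\delta)-\cot\psi=-\frac{\delta}{\sin^{2}\psi}+O\!\left(\frac{\delta^{2}}{\sin^{3}\psi}\right),
\end{equation*}
which follows from $(\cot)'=-\csc^{2}$ together with boundedness of the second derivative of $\cot$ on any compact set avoiding the poles. Substituting into the equation above, I obtain
\begin{equation*}
-\frac{\delta}{\sin^{2}\psi}=-\frac{V(n)}{\sin\pi k}+O(\delta^{2}),
\end{equation*}
and hence, multiplying by $\sin^{2}\psi=\sin^{2}\pi\theta(n,k)$ (which is $O(1)$),
\begin{equation*}
\delta=\sin^{2}\pi\theta(n,k)\,\frac{V(n)}{\sin\pi k}+O(\delta^{2}).
\end{equation*}
Using the crude bound $\delta=O(1/n)$ gives $O(\delta^{2})=O(1/n^{2})$, and dividing by $\pi$ yields exactly the claimed asymptotic.

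I do not expect any real obstacle here: the only subtle point is making sure the remainder in the Taylor expansion of $\cot$ is controlled uniformly, which is precisely what hypothesis \eqref{Gap312} provides. Everything else is book-keeping of orders in $V(n)$.
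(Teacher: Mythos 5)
Your proposal is correct and follows essentially the same route as the paper: a crude first-order bound from Lemma \ref{Leap3}, then a second-order Taylor expansion of the cotangent in \eqref{PrufT}, with the hypothesis \eqref{Gap312} keeping the relevant interval away from the poles so the second-derivative remainder is $O(1)$ and hence $O(\delta^2)=O(1/n^2)$. The only cosmetic difference is that you expand $\cot$ at $\pi\theta$ while the paper expands $f(x)=\cot\pi x$ at $\theta$; the bookkeeping of the factor $\pi$ is handled correctly in both.
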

\begin{proof}
Let $\theta_0=\theta(n,k)$ and $\theta_1=\theta(n+1,k)$.
By Lemma \ref{Leap3}, one   has
\begin{equation}\label{Gap311}
  \theta_1= k+  \theta_0+\frac{O(1)}{1+n}.
\end{equation}
Let $f(x)=\cot\pi x$.
By the assumption \eqref{Gap312} and \eqref{Gap311}, one has for large $n$,
\begin{equation}\label{Gap313}
  f^{\prime\prime}(x)=O(1).
\end{equation}
for all $x \in[\theta_0,\theta_1-k]$ or $x \in[\theta_1-k,\theta_0]$.

 By \eqref{PrufT}, one has
\begin{equation}\label{Gap315}
f(\theta_1-k)=f(\theta_0)-\frac{V(n)}{\sin \pi k}.
\end{equation}
Using the Tayor series and \eqref{Gap313}, one has
\begin{equation}\label{Gap314}
 f(\theta_1-k)=f(\theta_0)+f^{\prime}(\theta_0)(\theta_1-k-\theta_0)+O(1)(\theta_1-k-\theta_0)^2.
\end{equation}
Notice that
\begin{equation}\label{Gde}
  f^\prime(x)=-\frac{\pi}{\sin^2\pi x}.
\end{equation}
The Lemma follows \eqref{Gap311}, \eqref{Gap315}, \eqref{Gap314} and \eqref{Gde}.
\end{proof}
\begin{lemma}\label{Lemax}
 Let
\begin{equation}\label{GAq}
  \tilde{A}_q= \frac{1}{q}\max_{\phi\in[0,2\pi]}\sum_{j=0}^{q-1}|\sin(\frac{2\pi}{q}j+\phi)|.
\end{equation}
Then  for all   $q\geq 2$,
\begin{equation}\label{Gequal}
     \tilde{A}_q=A_q.
\end{equation}
Moreover, for even $q$,
\begin{equation}\label{GAqevendec9}
    A_q= \frac{2}{q}\sum_{j=0}^{\frac{q}{2}-1}\sin(\frac{2\pi}{q}j+\frac{\pi}{q}).
 \end{equation}
 For odd $q$, we have
 \begin{equation}\label{GAqodddec9prime}
    B_q=  \frac{1}{q}\min_{\phi\in[0,2\pi]}\sum_{j=0}^{q-1}|\sin(\frac{2\pi}{q}j+\phi)|>\frac{1}{2}.
 \end{equation}
\end{lemma}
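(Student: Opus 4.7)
The plan is to compute $f(\phi) := \sum_{j=0}^{q-1}|\sin(2\pi j/q + \phi)|$ in closed form on its fundamental period and then maximize/minimize directly. I would first record two evident periodicities: $f(\phi + 2\pi/q) = f(\phi)$ by reindexing $j \mapsto j+1 \pmod q$, and $f(\phi + \pi) = f(\phi)$ since $|\sin|$ has period $\pi$. For even $q$ these coincide and the fundamental period is $2\pi/q$; for odd $q$, $\gcd(q,2)=1$ forces the common period lattice to be $(\pi/q)\mathbb{Z}$, so $f$ is $\pi/q$-periodic.

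For even $q$, I would pair index $j$ with $j + q/2$: since $|\sin(x+\pi)| = |\sin x|$,
\begin{equation*}
f(\phi) = 2\sum_{j=0}^{q/2-1}|\sin(2\pi j/q + \phi)|.
\end{equation*}
On the fundamental domain $\phi \in [0, 2\pi/q]$, all the arguments lie in $[0,\pi]$, so absolute values can be dropped and a geometric sum yields
\begin{equation*}
\frac{f(\phi)}{2} = \operatorname{Im}\!\left(e^{i\phi}\,\frac{-2}{e^{2\pi i/q}-1}\right) = \frac{\cos(\phi - \pi/q)}{\sin(\pi/q)}.
\end{equation*}
The maximum is $1/\sin(\pi/q)$ at $\phi = \pi/q$, giving $\tilde A_q = A_q = 2/(q\sin(\pi/q))$. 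Formula \eqref{GAqevendec9} then falls out from rewriting $f(\pi/q)/2 = \sum_{j=0}^{q/2-1}\sin((2j+1)\pi/q)$.

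For odd $q$, I would restrict to the fundamental domain $\phi \in [0, \pi/q]$. Here the angles $\phi + 2\pi j/q$ for $j = 0,\dots,(q-1)/2$ lie in $[0,\pi]$ (so their sines are $\geq 0$) while the remaining $(q-1)/2$ angles lie in $(\pi,2\pi)$ (so their sines are $\leq 0$). Writing $S_{\pm}(\phi)$ for the corresponding partial sums of $\sin$, the identity $\sum_{j=0}^{q-1} e^{i(2\pi j/q + \phi)} = 0$ gives $S_+ + S_- = 0$, hence $f(\phi) = 2 S_+(\phi)$. Evaluating the geometric sum,
\begin{equation*}
S_+(\phi) = \frac{\cos(\pi/(2q))\,\cos(\phi - \pi/(2q))}{\sin(\pi/q)}.
\end{equation*}
The max on $[0,\pi/q]$ is at $\phi = \pi/(2q)$, giving $A_q = 2\cos(\pi/(2q))/(q\sin(\pi/q))$; the min is at the endpoints $\phi = 0, \pi/q$, giving $\min f = \cot(\pi/(2q))$, i.e.\ $B_q = \cot(\pi/(2q))/q$, which equals $(1+\cos(\pi/q))/(q\sin(\pi/q))$ via the half-angle identity $2\cos^2(\pi/(2q)) = 1 + \cos(\pi/q)$.

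Finally, the bound $B_q > 1/2$ is equivalent to $\tan(\pi/(2q)) < 2/q$, i.e.\ to $\tan(x)/x < 4/\pi$ with $x := \pi/(2q) \in (0, \pi/6]$ for odd $q \geq 3$. Since $2x > \sin(2x)$ for $x > 0$ shows $\tan(x)/x$ is increasing on $(0,\pi/2)$, it suffices to check the right endpoint $x = \pi/6$, where $\tan(\pi/6)/(\pi/6) = 2\sqrt{3}/\pi < 4/\pi$. The main point of care throughout is identifying the correct fundamental period before optimizing and choosing the index split for odd $q$ so that the closed-form formula for $S_+$ is valid on the full fundamental domain; with those in place everything else reduces to elementary trigonometry.
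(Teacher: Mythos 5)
Your proposal is correct, and for the identities \eqref{Gequal}, \eqref{GAqevendec9} and the value of $B_q$ it follows essentially the same route as the paper: reduce $\phi$ to a fundamental domain ($[0,2\pi/q]$ for even $q$, $[0,\pi/q]$ for odd $q$), remove the absolute values there by tracking which arguments land in $[0,\pi]$ versus $(\pi,2\pi)$, evaluate the resulting sine sum in closed form, and optimize the explicit expression $\cos(\phi-\pi/q)/\sin(\pi/q)$ (even case) or $\bigl(\cos\phi+\cos(\phi-\pi/q)\bigr)/\sin(\pi/q)$ (odd case). Your use of $\sum_{j=0}^{q-1}e^{i(2\pi j/q+\phi)}=0$ to write $f=2S_+$ in the odd case is a small streamlining of the paper's direct two-term computation, nothing more.

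The one genuinely different piece is the inequality $B_q>\tfrac12$. The paper evaluates the minimizing configuration $\phi=0$ and uses the pointwise comparison $|\sin\theta|>\sin^2\theta$ (strict off $\{0,\pm1\}$) together with $\sum_{j=0}^{q-1}\sin^2(2\pi j/q)=q/2$, avoiding any further trigonometric manipulation. You instead exploit the closed form $B_q=\cot(\pi/(2q))/q$ and reduce to the monotonicity of $\tan x/x$ on $(0,\pi/2)$, checking the endpoint $x=\pi/6$ (i.e.\ $q=3$). Both are valid; the paper's argument is shorter and does not need the explicit formula for $B_q$, while yours gives slightly more quantitative information (it shows how close $qB_q\sin$-type quantities are to their limit and makes the worst case $q=3$ explicit). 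Two minor points to make airtight in a written version: state why the comparison in your monotonicity step is strict (it is, since $x=\pi/(2q)<\pi/6$ forces strict inequality only after noting $2\sqrt3<4$, which you do), and justify that the arguments $2\pi j/q+\phi$ for $j\le (q-1)/2$, $\phi\in[0,\pi/q]$ indeed stay in $[0,\pi]$ at the endpoint $j=(q-1)/2$, $\phi=\pi/q$ (they equal exactly $\pi$ there, which is harmless since $\sin\pi=0$).
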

\begin{proof}
It is well known that
\begin{equation}\label{Gsum}
    \sin(a)+\sin(a+x)+\cdots+\sin(a+(n-1)x)=\frac{\cos(a-\frac{x}{2})-\cos(a+(n-\frac{1}{2})x)}{2\sin\frac{x}{2}}.
\end{equation}

Let us consider the even case first.
By the definition of $\tilde{A}_q$, one has for even $q$,
\begin{equation}\label{Gequdec91}
      \tilde{A}_q= \frac{2}{q}\max_{\phi\in[0,\frac{2\pi}{q}]}\sum_{j=0}^{\frac{q}{2}-1}\sin(\frac{2\pi}{q}j+\phi),
\end{equation}
and for odd $q$,
\begin{equation}\label{Gequdec91odd}
      \tilde{A}_q= \frac{1}{q}\max_{\phi\in[0,\frac{\pi}{q}]} \left(\sum_{j=0}^{\frac{q-1}{2}}\sin(\frac{2\pi}{q}j+\phi)- \sum_{j=\frac{q+1}{2}}^{q-1}\sin(\frac{2\pi}{q}j+\phi)\right).
\end{equation}
Applying \eqref{Gsum}, one has
\begin{equation}\label{Gequdec92}
    \sum_{j=0}^{\frac{q}{2}-1}\sin(\frac{2\pi}{q}j+\phi)=\frac{\cos(\phi-\frac{\pi}{q})}{\sin\frac{\pi}{q}}.
\end{equation}
\eqref{Gequal} and \eqref{GAqevendec9} follows from \eqref{Gequdec91} and \eqref{Gequdec92}.

Now let us consider the odd $q$.
Applying \eqref{Gsum}, one has
\begin{equation}\label{Gequdec94}
  \sum_{j=0}^{\frac{q-1}{2}}\sin(\frac{2\pi}{q}j+\phi)- \sum_{j=\frac{q+1}{2}}^{q-1}\sin(\frac{2\pi}{q}j+\phi)=
   \frac{\cos\phi+\cos(\phi-\frac{\pi}{q})}{\sin\frac{\pi}{q}}.
\end{equation}
\eqref{Gequdec94} achieves the maximum at $\phi=\frac{\pi}{2q}$  and the minimum at $\phi=0$. It leads to
\eqref{Gequal} and the  equality part of \eqref{GAqodddec9prime}.

We will prove the inequality part of  \eqref{GAqodddec9prime}.
It immediately follows from
\begin{eqnarray*}
   B_q   &=& \frac{1}{q} \sum_{j=0}^{q-1}|\sin\frac{2\pi}{q}j| \\
   &>&  \frac{1}{q} \sum_{j=0}^{q-1}\sin^2\frac{2\pi}{q}j=\frac{1}{2}.
\end{eqnarray*}

\end{proof}
\begin{lemma}\label{Leabsence1}
Suppose $k$ is irrational. Then for any $\varepsilon>0$, there exists some $N>0$ such that  for large $n_0$,
\begin{equation}\label{Gabsence1}
 \left|\left(\frac{1}{N} \sum_{n=n_0}^{n=n_0+N-1} |\sin 2\pi \theta(n,k)|\right)-\frac{2}{\pi}\right|\leq \varepsilon.
\end{equation}

\end{lemma}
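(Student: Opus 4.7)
The plan is to combine Weyl equidistribution for the irrational rotation by $k$ with the perturbative Prüfer angle bound given by Lemma \ref{Leap3}. Heuristically, \eqref{PrufT} says $\theta(n+1,k) \approx \theta(n,k)+k$, so over a block of length $N$ the angle $\theta(n,k)$ behaves like the orbit $\theta(n_0,k)+(n-n_0)k$ of the irrational rotation, and the average of $|\sin 2\pi \theta|$ along an equidistributed orbit equals $\int_0^1|\sin 2\pi x|\,dx=\tfrac{2}{\pi}$.

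First, I would invoke the uniform version of Weyl equidistribution for continuous $1$-periodic $f$: for $k$ irrational,
\[
\frac{1}{N}\sum_{n=0}^{N-1} f(\alpha+nk) \longrightarrow \int_0^1 f(x)\,dx \quad \text{uniformly in } \alpha \in \mathbb{R}.
\]
This is standard: it holds for trigonometric polynomials by direct summation (using $|1-e^{2\pi i m k}|>0$), and extends to continuous $f$ by Weierstrass approximation; the uniformity in $\alpha$ comes from the translation invariance of the sup norm. Applied to $f(x)=|\sin 2\pi x|$, I obtain $N=N(\varepsilon,k)$ with
\[
\sup_{\alpha\in\mathbb{R}}\left|\frac{1}{N}\sum_{n=0}^{N-1}|\sin 2\pi(\alpha+nk)|-\frac{2}{\pi}\right|<\frac{\varepsilon}{2}.
\]

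Second, I would control the drift of $\theta(n,k)$ from the linear orbit on a block $[n_0,n_0+N-1]$. Iterating Lemma \ref{Leap3} $N$ times and using the standing hypothesis \eqref{Gbdp} that $|V(j)|=O(1)/(1+j)$, I get
\[
|\theta(n,k)-\theta(n_0,k)-(n-n_0)k|\le \sum_{j=n_0}^{n_0+N-2}\left|\frac{V(j)}{\sin\pi k}\right|\le \frac{C N}{n_0}
\]
for $n_0\le n\le n_0+N-1$, with $C$ depending only on $k$ and the $O(1)$ constant in \eqref{Gbdp}. With $N$ now fixed by the first step, taking $n_0$ sufficiently large makes this error as small as desired.

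Finally, I would combine the two estimates using the $2\pi$-Lipschitz continuity of $x\mapsto |\sin 2\pi x|$: with $\alpha=\theta(n_0,k)-n_0 k$,
\[
\bigl||\sin 2\pi\theta(n,k)|-|\sin 2\pi(\alpha+nk)|\bigr|\le 2\pi\,|\theta(n,k)-\alpha-nk|\le \frac{2\pi C N}{n_0}.
\]
Averaging over $n_0\le n\le n_0+N-1$ and adding the two error contributions yields \eqref{Gabsence1}. The only nontrivial step is the uniform equidistribution; everything else is bookkeeping.
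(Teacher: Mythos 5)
Your proposal is correct and follows essentially the same route as the paper: a uniform (in the initial phase) equidistribution statement for the irrational rotation, combined with the drift bound obtained by iterating Lemma \ref{Leap3} under \eqref{Gbdp}, and then the Lipschitz continuity of $|\sin 2\pi x|$ to transfer the average from the linear orbit to $\theta(n,k)$. The only difference is that you spell out the Weierstrass-approximation proof of uniform Weyl equidistribution and the final Lipschitz bookkeeping, which the paper leaves implicit.
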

\begin{proof}
Notice that
\begin{equation*}
  \int_0^1|\sin2\pi \varphi|d\varphi=\frac{2}{\pi}.
\end{equation*}
By the Ergodicity of irrational rotation $ k$, for any $\varepsilon>0$, there exists some $N>0$ such that
for any $n_0$ and $\phi\in[0,2\pi)$, we have
\begin{equation}\label{Geo1}
     \left|\left(\frac{1}{N} \sum_{n=n_0}^{n=n_0+N-1} |\sin (2\pi k n+\phi)|\right)-\frac{2}{\pi}\right|\leq \varepsilon.
\end{equation}
By \eqref{Gbdp}, \eqref{PrufT} and \eqref{Gap3},  one has for large $n_0$,
\begin{equation}\label{Geo2}
    |  \theta(n^\prime,k)- (n^\prime -n_0)k -\theta(n_0,k)|\leq \varepsilon
\end{equation}
for all $n_0\leq n^\prime\leq n_0+N$.
Now the Lemma follows from \eqref{Geo1} and \eqref{Geo2}.

\end{proof}
\begin{lemma}\label{Leabsence2}
Suppose $k\in S_q$ with $q\geq 2$. Then for any $\varepsilon>0$ and   large $n_0$,
\begin{equation}\label{Gabsence2}
  \frac{1}{q} \sum_{n=n_0}^{n=n_0+q-1} |\sin 2\pi \theta(n,k)| \leq A_q+ \varepsilon,
\end{equation}
and for odd $q\geq 3$,
\begin{equation}\label{Gabsence2dec10}
  \frac{1}{q} \sum_{n=n_0}^{n=n_0+q-1} |\sin 2\pi \theta(n,k)| \geq B_q- \varepsilon.
\end{equation}
\end{lemma}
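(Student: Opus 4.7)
The plan is to reduce this to Lemma \ref{Lemax} by showing that over a single period $[n_0, n_0+q-1]$ with $n_0$ large, the Pr\"ufer angle $\theta(n,k)$ essentially undergoes an unperturbed rotation by $k$ per step, so that $\{2\pi\theta(n,k) \bmod 2\pi\}_{n=n_0}^{n_0+q-1}$ is (up to a small error) a permutation of $\{2\pi j/q + \phi\}_{j=0}^{q-1}$ for some phase $\phi$ depending on $\theta(n_0,k)$. The upper bound $A_q$ and (for odd $q$) lower bound $B_q$ then follow directly from the max/min characterizations established in Lemma \ref{Lemax}.

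In detail, first I would invoke Lemma \ref{Leap3}: because $|V(n)| = O(1)/(1+n)$, for any $\delta>0$ and for all $n_0$ sufficiently large (depending on $\delta$ and $q$), iterating \eqref{Gap3} over at most $q$ steps yields
\begin{equation*}
  |\theta(n,k) - (n-n_0)k - \theta(n_0,k)| \leq \delta \quad \text{for all } n_0 \leq n \leq n_0+q-1.
\end{equation*}
Since $|\sin|$ is $2\pi$-Lipschitz, this transfers to
\begin{equation*}
  \bigl||\sin 2\pi\theta(n,k)| - |\sin(2\pi(n-n_0)k + 2\pi\theta(n_0,k))|\bigr| \leq 2\pi\delta.
\end{equation*}

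Next, write $k = p/q$ with $\gcd(p,q)=1$ and set $\phi = 2\pi\theta(n_0,k)$. As $n$ ranges over $n_0, n_0+1, \dots, n_0+q-1$, the residues $(n-n_0)p \bmod q$ run through $\{0,1,\dots,q-1\}$ in some order, so
\begin{equation*}
  \sum_{n=n_0}^{n_0+q-1} |\sin(2\pi(n-n_0)k + \phi)| = \sum_{j=0}^{q-1} \Bigl|\sin\Bigl(\tfrac{2\pi j}{q} + \phi\Bigr)\Bigr|.
\end{equation*}
By Lemma \ref{Lemax}, this sum is at most $q A_q$, and for odd $q$ it is at least $q B_q$ by \eqref{GAqodddec9prime}. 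Dividing by $q$ and choosing $\delta = \varepsilon/(2\pi)$ (so that the cumulative Lipschitz error is $\leq \varepsilon$) yields \eqref{Gabsence2} and \eqref{Gabsence2dec10}.

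There is no real obstacle; the only point requiring slight care is that the error from Lemma \ref{Leap3} is summed over exactly $q$ steps (a fixed number depending only on $q$), so it can indeed be made arbitrarily small by taking $n_0$ large enough — unlike Lemma \ref{Leabsence1}, no ergodic averaging over a growing window $N$ is needed, because the orbit of a rational rotation closes up exactly after $q$ steps.
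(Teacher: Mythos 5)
Your proposal is correct and follows essentially the same route as the paper: iterate Lemma \ref{Leap3} over the fixed window of $q$ steps to approximate $\theta(n,k)$ by the free rotation $(n-n_0)k+\theta(n_0,k)$, observe that a full period of the rational rotation visits $\{j/q\}_{j=0}^{q-1}$ exactly once, and then apply the max/min characterizations $\tilde A_q=A_q$ and \eqref{GAqodddec9prime} from Lemma \ref{Lemax}. The paper's proof is just a terser version of the same argument (your explicit Lipschitz bookkeeping and the permutation remark are left implicit there), so there is nothing to flag.
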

\begin{proof}
By  the definition of $A_q$ and \eqref{Gequal},  for any  $n_0$ and $\phi\in[0,2\pi)$, we have
\begin{equation}\label{Gra1}
     \frac{1}{q} \sum_{n=n_0}^{n=n_0+q-1} |\sin (2\pi  kn+\phi)|\leq A_q.
\end{equation}

By \eqref{Gbdp}, \eqref{PrufT}  and \eqref{Gap3} again,  one has for large $n_0$,
\begin{equation}\label{Gra2}
    |  \theta(n^\prime,k)- (n^\prime -n_0)k -\theta(n_0,k)|\leq \varepsilon
\end{equation}
for all $n_0\leq n^\prime\leq n_0+q$.
Now \eqref{Gabsence2} follows from \eqref{Gra1} and \eqref{Gra2}.
Similarly, \eqref{Gabsence2dec10} follows from
\eqref{GAqodddec9prime}.
\end{proof}

\begin{proof}[\bf Proof of Theorem  \ref{thm1}]
By the assumption of Theorem \ref{thm1}, we have for any $\varepsilon>0$,
\begin{equation}\label{Gbdv}
    |V(n)|\leq \frac{a+\varepsilon}{1+n} \text { for large } n.
\end{equation}

We first consider $E\in S_0$, i.e., $k(E)$ is irrational.
By \eqref{PrufR} and \eqref{Gbdp}, one has
\begin{equation}\label{PrufR1}
  \ln R(n+1,k)^2 -\ln R(n,k)^2=-\frac{V(n)}{\sin \pi k}\sin 2\pi \theta(n,k)+\frac{O(1)}{n^2+1}.
\end{equation}

Denote by   $\lfloor x\rfloor$ be the largest integer less or equal than $x$. Assume $n_0$ is large enough.
By \eqref{Gabsence1},
we have for all $n>n_0$,
\begin{eqnarray}
    \ln R(n+1,k)^2  &\geq & \ln R(n_0,k)^2-\sum_{j=1}^{n} \frac{(a+\varepsilon)}{(1+j)\sin \pi k}|\sin 2\pi \theta(j,k)|-\sum_{j=n_0}^{n} \frac{O(1)}{j^2+1} \nonumber\\
   &\geq & -C(k,n_0,a) -\frac{(a+\varepsilon)}{\sin \pi k}\sum_{i=1}^{\lfloor\frac{n-n_0}{N}\rfloor}\sum_{m=n_0+(i-1)N}^ {m=n_0+i N-1}\frac{|\sin 2\pi \theta(m,k)|}{m+1}\nonumber\\
&\geq & -C(k,n_0,a)-\frac{(a+\varepsilon)}{\sin \pi k}\sum_{i=1}^{\lfloor\frac{n-n_0}{N}\rfloor}\frac{1}{n_0+(i-1)N}\sum_{m=n_0+(i-1)N}^ {m=n_0+i N-1}|\sin 2\pi \theta(m,k)| \nonumber\\
 &\geq & -C(k,n_0,a)-\frac{(a+\varepsilon)}{\sin \pi k} (\frac{2}{\pi}+\varepsilon) \sum_{i=1}^{\lfloor\frac{n-n_0}{N}\rfloor}\frac{N}{n_0+(i-1)N}\nonumber\\
&\geq & -C(k,n_0,a) -\frac{(a+\varepsilon)}{\sin \pi k} (\frac{2}{\pi}+\varepsilon) \ln n.\label{Gequ1}
\end{eqnarray}
Since $E=2\cos \pi k$ and $|E|<E_0$ with $E_0=2\sqrt{1-a^2A_0^2}$, we have for small enough $\varepsilon>0$,
\begin{equation}\label{Gequ2}
    \frac{(a+\varepsilon)}{\sin \pi k} (\frac{2}{\pi}+\varepsilon)<1.
\end{equation}
Thus by \eqref{Gequ1} and \eqref{Gequ2},
we have for large $n$,
\begin{equation*}
  R^2(n,k)\geq \frac{1}{C n}.
\end{equation*}
This implies
 $R(n,k)$ is not in $\ell^2(\N)$.
By \eqref{L2} and \eqref{L21}, we have that
$u(n,k)$ is not in  $\ell^2(\N)$. We finish the proof for the irrational $k $.

Assume $E\in S_q$ for $q\geq 2$. The proof is similar. We only need to replace $ N$ with $q$ and \eqref{Gabsence1}  with  \eqref{Gabsence2}.

\end{proof}
\begin{proof}[\bf Proof of Corollary \ref{cor1}]
By the definition of $A_q$, one has
\begin{equation*}
    A_q\leq 1 \text { for all possible } q.
\end{equation*}
Now Corollary \ref{cor1} follows from Theorem \ref{thm1}.
\end{proof}
\section{ Preparations for the rational type eigenvalues  with even denominators}\label{even}
In this section,  we consider $k\in S_q$ with  even $q\geq 2$.

By the definition of $A_q$ ($q\geq 2$) and Lemma \eqref{Lemax},  for any $\varepsilon>0$, there exists $\delta$ such that
\begin{equation}\label{G4equ1}
    \frac{1}{q}\sum_{j=0}^{q-1}|\sin(\frac{2\pi}{q}j+\phi)|\geq A_q-\varepsilon
\end{equation}
holds for all $\phi\in(\frac{\pi}{q}-2\pi\delta,\frac{\pi}{q}+2\pi\delta)\subset (0,\frac{2\pi}{q})$, where $\delta$ is small enough (will be determined soon).

Suppose $k=\frac{p}{q}$ with coprime $p$ and $q$.
Let (by the fact $q$ is even)  $p_1^+ ,p_2^+,\cdots,p_{\frac{q}{2}}^+$ and $p_1^-,p_2^-,\cdots,p_{\frac{q}{2}}^-$ be a permutation of $0,1,2,\cdots,p-1$
 such that  for all $\phi\in(\frac{\pi}{q}-2\pi\delta,\frac{\pi}{q}+2\pi\delta)$,
\begin{equation}\label{G4equ2}
    \sin(\frac{2\pi p}{q}p_j^++\phi)>0
\end{equation}
and
\begin{equation}\label{G4equ3}
   \sin(\frac{2\pi p}{q}p_j^-+\phi)<0
\end{equation}
for $j=1,2,\cdots,\frac{q}{2}$.
Actually,
\begin{equation}\label{Gdec8n1}
    p_{j}^+\frac{p}{q}= \frac{j-1}{q} \mod\Z
\end{equation}
and
\begin{equation}\label{Gdec8n2}
    p_{j}^-\frac{p}{q}=\frac{1}{2}+ \frac{j-1}{q} \mod\Z
\end{equation}
for $j=1,2,\cdots,\frac{q}{2}$.

Now we are in the position to  construct $V$.
Let  $n_0$ be a large fixed positive integer.
Define $V(n)=0$ for all $n\leq n_0-2$. Let  $V(n_0-1)$ be such that
\begin{equation}\label{G4equ4}
    \theta(n_0,k)=\frac{1}{2q}.
\end{equation}
Suppose $a>0$.
We will  define  for $m\geq 0$,
\begin{equation}\label{G4equ6}
  V(n_0+qm+p_j^+)=\frac{a_{m,j}^+}{1+n_0+qm}
\end{equation}
and
\begin{equation}\label{G4equ7}
    V(n_0+qm+p_j^-)=-\frac{a_{m,j}^-}{1+n_0+qm},
\end{equation}
where $a_{m,j}^{\pm}>0$ is close to $a$. We will give the values of $a_{m,j}^{\pm}$ later.
\begin{theorem}\label{Ledec9}
Let $k\in S_q$ with even $q\geq2$.
Let  $m\geq0$. Suppose $V(n-1)$ and $\theta(n,k)$ are defined for all $n\leq n_0+mq$. Suppose $\theta(n_0+mq,k)\in(\frac{1}{2q}-\delta,\frac{1}{2q}+\delta)$.
Then there exists $a_{m,j}^{\pm}$, $j=1,2,\cdots,\frac{q}{2}$ and $\tilde{\delta}$ (small enough depending  on $\delta$ and $\tilde{\delta}\to 0$ as $\delta\to 0$ ) such that the following  statements hold,
\begin{description}
  \item[Nominators $a_{m,j}^{\pm}$]   for all $j=1,2,\cdots,\frac{q}{2}$,
  \begin{equation*}
  | a_{m,j}^{\pm}-a|\leq \tilde{\delta}.
  \end{equation*}

  \item[Potentials $V(n)$ on $(n_0+mq-1,n_0+mq+q)$]   Let
  \begin{equation*}
    V(n_0+mq+p_j^{\pm})= \pm \frac{a_{m,j}^{\pm}}{1+n_0+mq}
  \end{equation*}
  and define $\theta(n,k)$ by \eqref{PrufT} for $n_0+mq +1\leq n\leq n_0+mq+q$. Then
  \begin{equation}\label{Gequ20}
    \theta(n_0+mq+q,k)=qk+ \theta(n_0+mq,k)+\frac{O(1)}{1+n_0+m^2}.
  \end{equation}
\end{description}

\end{theorem}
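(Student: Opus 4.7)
The plan is to iterate Lemma \ref{Leapr31} over one period of length $q$, separate the leading linear-in-$V$ term from the quadratic remainder, and then pin down the free parameters $a_{m,j}^{\pm}$ by solving a single linear algebraic equation that forces the leading term to cancel. To apply Lemma \ref{Leapr31}, I must first check its hypothesis \eqref{Gap312} throughout the period $[n_0+mq,\, n_0+mq+q-1]$. Since $\theta(n_0+mq,k) \in (\frac{1}{2q}-\delta,\frac{1}{2q}+\delta)$, iterating the cruder Lemma \ref{Leap3} $q$ times with $|V(n)| = O(1/n)$ gives $\theta(n_0+mq+j,k) = \theta(n_0+mq,k) + jk + O(1/(n_0+mq))$; choosing $\delta$ and $n_0$ appropriately keeps every such phase at distance $\gtrsim 1/q$ from $\mathbb Z$, so $1/|\sin\pi\theta(n_0+mq+j,k)| = O(1)$ uniformly in $j$.

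Telescoping Lemma \ref{Leapr31} from $n = n_0+mq$ through $n_0+mq+q-1$ then yields
\begin{equation*}
\theta(n_0+mq+q,k) - qk - \theta(n_0+mq,k) = \frac{1}{\pi \sin \pi k} \sum_{j=0}^{q-1} \sin^2\pi\theta(n_0+mq+j,k)\, V(n_0+mq+j) + \frac{O(1)}{(1+n_0+mq)^2}.
\end{equation*}
The remainder is $O(1)/(1+n_0+m^2)$ since $(1+n_0+mq)^2 \geq 1+n_0+m^2$. Replacing $\sin^2\pi\theta(n_0+mq+j,k)$ by $\sin^2\pi(\theta(n_0+mq,k)+jk)$ introduces an extra error of order $q/(n_0+mq)^2$ because $V = O(1/n)$. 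Writing $\theta \coloneqq \theta(n_0+mq,k)$ and inserting \eqref{G4equ6}--\eqref{G4equ7} together with the arithmetic identities \eqref{Gdec8n1}--\eqref{Gdec8n2}, which give $\sin^2\pi(\theta + p_l^+ k) = \sin^2\pi(\theta + (l-1)/q)$ and $\sin^2\pi(\theta + p_l^- k) = \cos^2\pi(\theta + (l-1)/q)$, the leading sum reduces to
\begin{equation*}
\frac{1}{\pi(1+n_0+mq)\sin\pi k}\sum_{l=1}^{q/2}\Bigl[\sin^2\pi\bigl(\theta + \tfrac{l-1}{q}\bigr)\, a_{m,l}^+ - \cos^2\pi\bigl(\theta + \tfrac{l-1}{q}\bigr)\, a_{m,l}^-\Bigr].
\end{equation*}

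The algebraic equation \eqref{Gapr6algebraic} is therefore the vanishing of the bracket above; this is a single linear constraint on the $q$ unknowns $\{a_{m,l}^{\pm}\}$. At the reference configuration $\theta = 1/(2q)$ and $a_{m,l}^{\pm} = a$, the bracket equals $-a \sum_{l=0}^{q/2-1} \cos(2\pi\theta + 2\pi l/q)$, which by the summation identity \eqref{Gsum} collapses to $-a \sin(2\pi\theta - \pi/q)/\sin(\pi/q)$ and vanishes exactly. For $\theta$ in the $\delta$-neighborhood of $1/(2q)$, the bracket at $a_{m,l}^{\pm} = a$ is $O(\delta)$; since each coefficient $\sin^2\pi(\theta+(l-1)/q)$ and $\cos^2\pi(\theta+(l-1)/q)$ is bounded below by a constant depending only on $q$, I can absorb this imbalance into an $O(\delta)$ adjustment of, say, a single $a_{m,l}^{\pm}$. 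This produces $|a_{m,l}^{\pm} - a| \leq \tilde\delta$ with $\tilde\delta \to 0$ as $\delta \to 0$, and by construction the leading sum vanishes, so \eqref{Gequ20} follows with the advertised error.

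The main obstacle is Step 3: exhibiting solvability of the algebraic equation with all nominators simultaneously close to $a$. The crucial input is the exact cancellation at the reference point $(\theta,a_{m,l}^\pm)=(1/(2q),a)$, which relies on the precise choice of the base phase $\frac{1}{2q}$ and on the specific partition of $\{0,1,\dots,q-1\}$ into $\{p_l^+\}$ and $\{p_l^-\}$ dictated by \eqref{G4equ2}--\eqref{G4equ3}; without this identity the construction would leave an $O(1/n)$ rather than $O(1/n^2)$ drift in $\theta$, which is exactly the failure mode discussed in the introduction. The remaining subtlety is verifying that the phase stays within $(\frac{1}{2q}-\delta,\frac{1}{2q}+\delta)$ after many iterations of this one-period step, but this is a separate concern handled outside the statement of this theorem.
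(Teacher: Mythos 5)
Your proposal is correct and follows essentially the same route as the paper: verify the hypothesis of Lemma \ref{Leapr31} from the phase localization near $\frac{1}{2q}$, telescope over one period to isolate the linear-in-$V$ term, and enforce the single linear constraint \eqref{Gapr6algebraic} by an $O(\delta)$ perturbation of one nominator, using the exact cancellation at $\theta=\frac{1}{2q}$ (your cosine-sum computation is the same identity as the paper's \eqref{Gdec9551}, rewritten via $\sin^2(x+\tfrac12\pi)=\cos^2 x$).
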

\begin{proof}
  Recall that $k=\frac{p}{q}$ and the potential $V$ we constructed  will satisfy  \eqref{Gbdp}.

By \eqref{Gbdp} and \eqref{Gap3}
\begin{equation}\label{G4equ5dec8ap31}
       \theta(n+j,k)= j k+  \theta(n,k)+ \frac{O(1)}{1+n},
\end{equation}
for any $n$ and $1\leq j\leq q$.

By the definitions of $p^{\pm}_j$, one has
\begin{equation}\label{G4equ5dec8ap32}
       \theta(n_0+mq+p_j^{+},k)= \frac{j-1}{q}+  \theta(n_0+mq,k)+ \frac{O(1)}{1+n_0+m}
\mod \Z,\end{equation}
and
\begin{equation}\label{G4equ5dec8ap33}
       \theta(n_0+mq+p_j^{-},k)=\frac{1}{2}+ \frac{j-1}{q}+  \theta(n_0+mq,k)+ \frac{O(1)}{1+n_0+m}\mod \Z,
\end{equation}
for   $1\leq j\leq \frac{q}{2}$.

By the assumption that $\theta(n_0+mq,k)\in(\frac{1}{2q}-\delta,\frac{1}{2q}+\delta)$,
one has
\begin{equation}\label{G4equ5dec8ap34}
  \frac{1}{|\sin\pi \theta(n_0+mq+p_j^{\pm},k)|}=O(1),
\end{equation}
for   $1\leq j\leq \frac{q}{2}$.

  By Lemma \ref{Leapr31}, one has for $j=0,1,2,\cdots,q-1$,
  \begin{equation*}
    \theta(n_0+mq+j+1,k) \;\;\;\;\;\;\;\;\;\;\;\;\;\;\;\;\;\;\;\;\;\;\;\;\;\;\;\;\;\;\;\;\;\;\;\;\;\;\;\;\;\;\;\;\;\;\;\;\;\;\;\;\;\;\;\;\;\;\;\;\;\;\;\;\;\;\;\;
    \;\;\;\;\;\;\;\;\;\;\;\;\;\;\;\;\;\;\;\;\;\;\;\;\;\;\;\;\;\;\;\;\;\;\;\;
  \end{equation*}
  \begin{eqnarray}
     &=& k+  \theta(n_0+mq+j,k)+ \sin^2\pi  \theta(n_0+mq+j,k)\frac{V(n_0+mq+j)}{ \pi\sin \pi k}+\frac{O(1)}{1+n_0+m^2} \nonumber\\
      &=& k+  \theta(n_0+mq+j,k)+ \sin^2\pi  (\theta(n_0+mq,k)+jk)\frac{V(n_0+mq+j)}{ \pi\sin \pi k}+\frac{O(1)}{1+n_0+m^2}, \label{G4equ5}
  \end{eqnarray}
  where the last inequality holds by \eqref{G4equ5dec8ap31}.

Thus, one has
 \begin{equation*}
    \theta(n_0+mq+q,k) \;\;\;\;\;\;\;\;\;\;\;\;\;\;\;\;\;\;\;\;\;\;\;\;\;\;\;\;\;\;\;\;\;\;\;\;\;\;\;\;\;\;\;\;\;\;\;\;\;\;\;\;\;\;\;\;\;\;\;\;\;\;\;\;\;\;\;\;
    \;\;\;\;\;\;\;\;\;\;\;\;\;\;\;\;\;\;\;\;\;\;\;\;\;\;\;\;\;\;\;\;\;\;\;\;
  \end{equation*}
\begin{equation}\label{G4equ5dec8}
      = qk+  \theta(n_0+mq,k)+ \sum _{j=0}^{q-1}\sin^2\pi  (\theta(n_0+mq,k)+j k)\frac{V(n_0+mq+j)}{\pi \sin \pi k}+\frac{O(1)}{1+n_0+m^2}.
\end{equation}

By \eqref{G4equ5dec8}, in order to guarantee \eqref{Gequ20}, we only need
to  construct $a_{m,j}^{\pm}$ such that
\begin{equation}\label{Gapr6algebraic}
  \sum _{j=0}^{q-1}\sin^2\pi  (\theta(n_0+mq,k)+j k) V(n_0+mq+j)=0.
\end{equation}
 Let $ a_{m,j}^+=a$ for all $j=1,2,\cdots,\frac{q}{2}$ and $ a_{m,j}^-=a$ for all $j=1,2,\cdots,\frac{q}{2}-1$.
 By \eqref{G4equ6} and \eqref{G4equ7},
 it suffices to determine $a_{m,\frac{q}{2}}^-$ such that
 \begin{equation*}
  a\sum_{j=1}^{\frac{q}{2}} \sin^2\pi  (\theta(n_0+mq,k)+p_j^+ k)\;\;\;\;\;\;\;\;\;\;\;\;\;\;\;\;\;\;\;\;\;\;\;\;\;\;\;\;\;\;\;\;\;\;\;\;\;\;\;\;\;\;\;\;\;\;\;\;\;\;\;\;\;\;\;\;\;\;\;\;\;\;\;\;\;\;
 \end{equation*}
  \begin{equation}\label{Gequ21aarp}
   -a\sum_{j=1}^{\frac{q}{2}-1} \sin^2\pi  (\theta(n_0+mq,k)+p_j^- k)-a_{m,\frac{q}{2}}^-\sin^2\pi  (\theta(n_0+mq,k)+\frac{q-1}{q})=0.
  \end{equation}
  By the definition of $p_j^{\pm}$ (\eqref{Gdec8n1} and \eqref{Gdec8n2}) and $k=\frac{p}{q}$, it suffices to guarantee that

\begin{equation*}
  a\sum_{j=1}^{\frac{q}{2}} \sin^2\pi  (\theta(n_0+mq,k)+\frac{j-1}{q})\;\;\;\;\;\;\;\;\;\;\;\;\;\;\;\;\;\;\;\;\;\;\;\;\;\;\;\;\;\;\;\;\;\;\;\;\;\;\;\;\;\;\;\;\;\;\;\;\;\;\;\;\;\;\;\;\;\;\;\;\;\;\;\;\;\;
 \end{equation*}
  \begin{equation}\label{Gequ21}
   -a\sum_{j=1}^{\frac{q}{2}-1} \sin^2\pi  (\theta(n_0+mq,k)+\frac{1}{2}+\frac{j-1}{q})-a_{m,\frac{q}{2}}^-\sin^2\pi  (\theta(n_0+mq,k)+\frac{q-1}{q})=0.
  \end{equation}
  Direct computation implies that
  \begin{equation}\label{Gdec9551}
   \sum_{j=1}^{j=\frac{q}{2}} \sin^2\pi (\frac{1}{2q}+\frac{j-1}{q})= \sum_{j=1}^{j=\frac{q}{2}} \sin^2\pi (\frac{1}{2q}+\frac{1}{2}+\frac{j-1}{q}).
  \end{equation}
  This shows that  if $ \theta(n_0+mq,k)=\frac{1}{2q}$, \eqref{Gequ21} holds for $a_{m,\frac{q}{2}}^-=a$.

  In our case, $\theta(n_0+mq,k)\in(\frac{1}{2q}-\delta,\frac{1}{2q}+\delta)$. Then there exist $\tilde{\delta}>0$ (small) and  $a_{m,\frac{q}{2}}^-$ such that
  $ | a_{m,\frac{q}{2}}^--a|\leq \tilde{\delta}$ and  \eqref{Gequ21} holds.

\end{proof}

\section{Proof of Theorems \ref{thm2}, \ref{thm3}, \ref{thm2odd}, \ref{thm3odd}, Corollaries \ref{cor2} and \ref{cordec} }\label{embedone}
\begin{proof}[ \bf Proof of Theorems \ref{thm2} and   \ref{thm3}  for $q=0$]
Solve the following equation with initial condition $\theta(0)$,
\begin{equation}\label{PrufTodd}
    \cot (\pi \theta(n+1,k)-\pi k)=\cot \pi \theta(n,k)-\frac{V(n)}{\sin \pi k},
\end{equation}
with
\begin{equation*}
    V(n)=\frac{a}{1+n} \text{sgn}(\sin 2\pi \theta(n,k)),
\end{equation*}
where sgn$(\cdot)$ is the sign function.
Thus equation \eqref{PrufR} becomes
\begin{equation}\label{PrufRodd}
    \frac{R(n+1,k)^2}{R(n,k)^2}=1-\frac{a}{\sin \pi k}\frac{|\sin 2\pi \theta(n,k)|}{1+n}+\frac{O(1)}{1+n^2}.
\end{equation}
Applying   \eqref{Gabsence1} and following the proof of \eqref{Gequ1}, we have
\begin{equation}
    \ln R(n+1,k)^2\leq  C(k,n_0,a)  -\frac{a}{\sin \pi k} (\frac{2}{\pi}-\varepsilon) \ln n.\label{Gequ1oddap3}
\end{equation}
Suppose $E$ and $a$ satisfy the assumption for $q=0$ in Theorems \ref{thm2} and   \ref{thm3}. Then
we have
\begin{equation}\label{Gequ14}
  \frac{a}{\sin \pi k} (\frac{2}{\pi}-\varepsilon) >1
\end{equation}
for small $\varepsilon>0$.
By \eqref{Gequ1oddap3}, we obtain that  $R(n,k)$ is in $\ell^2(\N)$.
By changing the initial condition $\theta(0)$, we can make the $\ell^2(\N)$ solution $u$ satisfy the given boundary condition \eqref{Gbc}.
 We finish the proof for $q=0$.
\end{proof}

In the following arguments, we will  continue to use the idea ``making  the $\ell^2(\N)$ solution $u$ satisfy the given boundary condition \eqref{Gbc} by changing the initial condition $\theta(0)$".
In order to avoid the repetition, sometimes  we omit the details.

\begin{proof}[\bf Proof of Theorems \ref{thm2} and   \ref{thm3} for $q\geq 2$ ]

Fix $E\in S_q $ with even $q\geq 2$.

By Theorem \ref{Ledec9} and induction, we can prove that
 there exist $a_{j,m}^{\pm}$, $j=1,2,\cdots,\frac{q}{2}$, $m\geq 0$,  and $\tilde{\delta}$  such that the following statements hold,
 \begin{itemize}
   \item  for all $j=1,2,\cdots,\frac{q}{2}$,
  \begin{equation}\label{Gdec91}
  | a_{j,m}^{\pm}-a|\leq \tilde{\delta}.
  \end{equation}
   \item
   \begin{equation}\label{Gdec92}
    V(n_0+mq+p_j^{\pm})= \pm \frac{a_{j,m}^{\pm}}{1+n_0+mq}.
  \end{equation}
   \item
   \begin{eqnarray}
    \theta(n_0+mq,k) &=& mqk+\theta(n_0,k)+ O(1)\sum_{j=1}^{m}\frac{1}{1+n_0+j^2} \nonumber \\
      &=& \frac{1}{2q} +O(1)\sum_{j=1}^{m}\frac{1}{1+n_0+j^2}\mod\Z \nonumber \\
       &=& \frac{1}{2q} + \frac{O(1)}{1+n_0}\mod\Z.\label{Gdec93}
   \end{eqnarray}
 \end{itemize}

By \eqref{Gdec93} and \eqref{Gap3}, one has    for any $0\leq j\leq q-1$,
\begin{equation}\label{Gdec94}
    \sin \pi (\theta(n_0+mq+j,k) -\frac{1}{2q}- kj)=\frac{O(1)}{1+n_0}.
\end{equation}
By \eqref{Gdec93}, \eqref{Gdec94} and \eqref{GAqevendec9}, one has
\begin{equation}\label{GAqevendec9apr4}
      \frac{1}{q}\sum_{j=0}^{q-1} |\sin \pi \theta(n_0+mq+j,k)|=A_q+\frac{O(1)}{1+n_0}.
 \end{equation}

By \eqref{G4equ2}, \eqref{G4equ3}, \eqref{Gdec92}, and \eqref{Gdec94}, we have for any $0\leq j\leq q-1$,
\begin{equation*}
    \frac{V(n_0+mq+j)}{\sin \pi k}\sin 2\pi \theta(n_0+mq+j,k)>0.
\end{equation*}

By \eqref{Gdec91} and \eqref{GAqevendec9apr4} (letting $n_0$ be large), we have
\begin{equation}\label{G4equ12}
    \frac{1}{q}\sum_{j=0}^{q-1}V(n_0+mq +j)\sin 2\pi \theta(n_0+mq+j,k)\geq \frac{(a-\tilde{\delta})}{1+n_0+mq}(A_q-\varepsilon).
\end{equation}
 By \eqref{PrufR} and \eqref{G4equ12},
we have for $m\geq 1$,
\begin{equation*}
  \ln R(n_0+mq,k)^2\;\;\;\;\;\;\;\;\;\;\;\;\;\;\;\;\;\;\;\;\;\;\;\;\;\;\;\;\;\;\;\;\;\;\;\;\;\;\;\;\;\;\;\;\;\;\;\;\;\;\;\;\;\;\;\;\;\;\;\;\;\;\;\;\;\;\;\;\;\;\;\;\;\;\;\;\;\;\;
  \;\;\;\;\;\;\;\;\;\;\;\;\;\;\;\;\;\;\;\;\;\;\;\;\;\;\;\;\;\;\;\;\;\;\;\;\;\;\;\;
\end{equation*}
\begin{eqnarray}
      &\leq & \ln R(n_0,k)^2-\frac{1}{\sin\pi k}\sum_{j=0}^{m-1} \sum_{i=0}^{q-1}V(n_0+jq +i)\sin( 2\pi \theta(n_0+jq+i,k)) +\sum_{j=1}^{m} \frac{O(1)}{n_0+j^2} \nonumber\\
   &\leq & C(k,n_0,a) -\frac{1}{\sin \pi k}\sum_{j=0}^{m-1} q\frac{a-\tilde{\delta}}{n_0+jq}(A_q-\varepsilon) \nonumber\\
&\leq & C(k,n_0,a) - \frac{(a-\tilde{\delta})(A_q-\varepsilon)}{\sin \pi k} \ln m \nonumber\\
&\leq & C(k,n_0,a) - \frac{(a-\tilde{\delta})(A_q-\varepsilon)}{\sin \pi k} \ln (n_0+mq) .\label{Gequ13}
\end{eqnarray}
Suppose $E$ and $a$ satisfy the assumption for $q\geq 2$ in Theorems \ref{thm2} and   \ref{thm3}. Then
we have
\begin{equation}\label{Gequ14apr4}
  \frac{(a-\tilde{\delta})(A_q-\varepsilon)}{\sin \pi k} >1
\end{equation}
for small $\tilde{\delta},\varepsilon>0$.
By \eqref{Gequ13} and \eqref{Gequ14apr4},
one has
\begin{equation*}
   \sum_{m} R(n_0+mq,k)^2<\infty.
\end{equation*}
This implies (using \eqref{PrufR}),
\begin{equation*}
   \sum_{n} R(n,k)^2<\infty.
\end{equation*}
By \eqref{L2} and \eqref{L21}, we have
$u(n,k)$ is   in  $\ell^2(\N)$. We finish the proof for $q\geq 2$.

\end{proof}

\begin{proof}[\bf Proof of Theorems \ref{thm2odd} and   \ref{thm3odd}]
Solve the following equation,
\begin{equation}\label{PrufTodd}
    \cot (\pi \theta(n+1,k)-\pi k)=\cot \pi \theta(n,k)-\frac{V(n)}{\sin \pi k},
\end{equation}
with
\begin{equation*}
    V(n)=\frac{a}{1+n} \text{sgn}(\sin 2\pi \theta(n,k)),
\end{equation*}
where sgn$(\cdot)$ is the sign function.
Thus equation \eqref{PrufR} becomes
\begin{equation}\label{PrufRodddec}
    \frac{R(n+1,k)^2}{R(n,k)^2}=1-\frac{a}{\sin \pi k}\frac{|\sin 2\pi \theta(n,k)|}{1+n}+\frac{O(1)}{1+n^2}.
\end{equation}
Applying   \eqref{Gabsence2dec10} and  \eqref{PrufR}, we have
\begin{equation*}
    \ln R(n+q,k)^2\leq  \ln R(n,k)^2  -q(B_q-\varepsilon)\frac{a}{\sin \pi k} \frac{1}{1+n}.
\end{equation*}
This implies
\begin{eqnarray}
   \ln R(n_0+mq,k)^2 &\leq& C(n_0,k,a) -(B_q-\varepsilon)\frac{a}{\sin \pi k} \sum _{j=0}^{m-1}\frac{q}{1+n_0+jq}\nonumber\\
   &=&  C(n_0,k,a) - (B_q-\varepsilon)\frac{a}{\sin \pi k}\ln(n_0+mq) .\label{Gequ1odd}
\end{eqnarray}

Suppose $E$ and $a$ satisfy the assumption for odd $q$ in   Theorems \ref{thm2odd} and   \ref{thm3odd}. Then
we have
\begin{equation}\label{Gequ14}
 (B_q-\varepsilon)\frac{a}{\sin \pi k}  >1
\end{equation}
for small $\varepsilon>0$.
By \eqref{Gequ1odd}, we obtain   $R(n,k)$ is in $\ell^2(\N)$. We finish the proof for odd $q\geq 3$.
\end{proof}

\begin{proof}[\bf Proof of Corollary \ref{cor2}]
Let us consider $q=2$. Then $ A_2=1$ and $S_2=\{0\}$.
Now the Corollary follows Theorem \ref{thm2}.
\end{proof}

\begin{proof}[\bf Proof of Corollary \ref{cordec}]

By \eqref{Gasy} and the fact that $a<\frac{\pi}{2}$, we have  for large $q$,
\begin{equation*}
  \frac{1}{A_q}>a,
\end{equation*}
and
\begin{equation*}
(-2\sqrt{1-\frac{4a^2}{\pi^2}}+\epsilon,2\sqrt{1-\frac{4a^2}{\pi^2}}-\epsilon)\subset (-2\sqrt{1-a^2A_q^2},2\sqrt{1-a^2A_q^2}).
\end{equation*}
Theorem \ref{thm1} implies that there are no eigenvalues in $(-2\sqrt{1-\frac{4a^2}{\pi^2}}+\epsilon,2\sqrt{1-\frac{4a^2}{\pi^2}}-\epsilon)\cap S_q$ for $q=0$ and large $q$.
Now Corollary \ref{cordec} follows.
\end{proof}
\section{Proof of Theorem \ref{thm6}}\label{SecAsym}
 \begin{lemma}\cite[Lemma 4.4]{KLS}\label{Leapr7}
  Let $\{e_i\}_{i=1}^N$ be a set of unit vector in a Hilbert space $\mathcal{H}$  so that
  \begin{equation*}
    \alpha=N\sup_{i\neq j}| \langle e_i,e_j\rangle|<1.
  \end{equation*}
  Then
  \begin{equation}\label{Gapr71}
    \sum_{i=1}^N|\langle g,e_i\rangle|^2\leq (1+\alpha)||g||^2.
  \end{equation}
  \end{lemma}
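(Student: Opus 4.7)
The plan is to reduce the stated Bessel-type inequality to an operator norm bound and then dualize. Define the analysis operator $T \colon \mathcal{H} \to \mathbb{C}^N$ by $(Tg)_i = \langle g, e_i\rangle$, so that the conclusion \eqref{Gapr71} is precisely $\|Tg\|^2 \leq (1+\alpha)\|g\|^2$, i.e.\ $\|T\|^2 \leq 1+\alpha$. Since $\|T\| = \|T^{\ast}\|$ and the adjoint is the synthesis operator $T^{\ast}(c_1,\dots,c_N) = \sum_{i=1}^N c_i e_i$, it suffices to prove the upper frame bound
$$\left\|\sum_{i=1}^N c_i e_i\right\|^2 \leq (1+\alpha)\sum_{i=1}^N |c_i|^2$$
for every $c \in \mathbb{C}^N$.

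To establish this bound, I would expand
$$\left\|\sum_{i=1}^N c_i e_i\right\|^2 = \sum_{i,j} c_i \overline{c_j}\,\langle e_i,e_j\rangle = \sum_i |c_i|^2 + \sum_{i\neq j} c_i\overline{c_j}\,\langle e_i,e_j\rangle,$$
using $\|e_i\| = 1$ on the diagonal. The hypothesis $|\langle e_i,e_j\rangle| \leq \alpha/N$ for $i\neq j$ bounds the off-diagonal sum in absolute value by $(\alpha/N)\sum_{i\neq j}|c_i||c_j|$, and the elementary inequality $|c_i||c_j| \leq \tfrac12(|c_i|^2+|c_j|^2)$ yields $\sum_{i\neq j}|c_i||c_j| \leq (N-1)\sum_i|c_i|^2$. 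Combining these estimates gives
$$\left\|\sum_i c_i e_i\right\|^2 \leq \left(1 + \tfrac{(N-1)\alpha}{N}\right)\sum_i|c_i|^2 \leq (1+\alpha)\sum_i|c_i|^2,$$
which is exactly the required bound on $\|T^{\ast}\|^2$.

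Equivalently, one could note that $TT^{\ast}$ is the Hermitian Gram matrix with ones on the diagonal and entries of modulus at most $\alpha/N$ off the diagonal; by the Gershgorin disc theorem every eigenvalue lies in $[1-\alpha,1+\alpha]$, yielding $\|T\|^2 = \|TT^{\ast}\| \leq 1+\alpha$. There is no genuine obstacle here: the argument is an elementary combination of the unit-norm hypothesis with the uniform bound on off-diagonal inner products. In fact the assumption $\alpha<1$ is not needed for the upper bound at all — it is only required for the companion lower frame bound $\|\sum c_i e_i\|^2 \geq (1-\alpha)\sum |c_i|^2$, which is not part of this lemma.
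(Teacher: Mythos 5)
Your proof is correct, and it is essentially the standard argument for this Bessel-type bound: the paper itself gives no proof, citing \cite[Lemma 4.4]{KLS}, where the result is obtained by the same duality between the analysis and synthesis operators together with the Schur/Gershgorin-type estimate $\bigl\|\sum_i c_i e_i\bigr\|^2\leq(1+\alpha)\sum_i|c_i|^2$ coming from the off-diagonal bound $|\langle e_i,e_j\rangle|\leq\alpha/N$. Your closing observation that the hypothesis $\alpha<1$ is only needed for the lower frame bound, not for \eqref{Gapr71}, is also accurate.
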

  \begin{lemma}\cite[Lemma 4.2]{LDJ}\label{Lcon1}
Suppose $V(n)=\frac{O(1)}{1+n}$. Let $E_1,E_2\in (-2,2)$ be such that $k(E_1)\neq k(E_2)$ and $k(E_i)+k(E_j)\neq 1$ for $i,j=1,2$. Then  for any $\varepsilon>0$, there exist $D(E_1,E_2,\varepsilon)$ and $D(E_1,\varepsilon)$  such that
\begin{equation}\label{Gcons4}
  \left|  \sum _{t=1}^n \frac{\cos 4 \theta(t,E_1)}{1+t}\right|\leq D(E_1,\varepsilon)+ {\varepsilon}\ln n,
\end{equation}
and
\begin{equation}\label{Gcons5}
    \left|\sum_{t=1}^n\frac{\sin 2 \theta(t,E_1) \sin 2 \theta(t,E_2)}{1+t}\right|\leq D(E_1,E_2,\varepsilon)+{\varepsilon}\ln  n.
\end{equation}

\end{lemma}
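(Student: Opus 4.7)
Both estimates reduce to bounding a weighted oscillatory sum of the form $\sum_{t=1}^{n}z_t/(1+t)$ where $|z_t|=1$ and $z_t$ is a complex exponential of a linear combination of the Pr\"ufer angles. For the first estimate, take $z_t=e^{i\cdot 4\theta(t,E_1)}$. For the second, use the product-to-sum identity
\[
\sin 2\theta_1\sin 2\theta_2=\tfrac12\bigl[\cos 2(\theta_1-\theta_2)-\cos 2(\theta_1+\theta_2)\bigr]
\]
to split the sum into two of the same shape, with $z_t=e^{i\cdot 2(\theta(t,E_1)\pm\theta(t,E_2))}$. The arithmetic hypotheses on $k(E_1),k(E_2)$ are designed exactly so that, in each of these three cases, the ``rotation frequency per step'' is not congruent to $0$ mod $2\pi$.

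\textbf{Step 1: approximate rotation.} By \eqref{PrufT} and Lemma~\ref{Leap3}, for $t$ large we have $\theta(t+1,k)-\theta(t,k)-k=O(V(t)/\sin\pi k)=O(1/(1+t))$. Writing $\phi(t)$ for the relevant linear combination of $\theta(t,E_1),\theta(t,E_2)$ and $\omega$ for the corresponding combination of $k(E_1),k(E_2)$, this yields $\phi(t+1)-\phi(t)-\omega=O(1/(1+t))$. Setting $w:=e^{i\omega}$ and Taylor expanding $e^{i(\phi(t+1)-\phi(t))}$, we obtain
\[
z_{t+1}=w\,z_t+\epsilon_t,\qquad |\epsilon_t|\le\frac{C_1}{1+t},
\]
with $C_1=C_1(E_1,E_2,V)$. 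The conditions $2k(E_1),2k(E_2)\notin\mathbb{Z}$ (encoded in $k(E_i)+k(E_i)\neq 1$) and $k(E_1)\pm k(E_2)\notin\mathbb{Z}$ (encoded in $k(E_1)\neq k(E_2)$ and $k(E_1)+k(E_2)\neq 1$) guarantee $w\neq 1$ in every case.

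\textbf{Step 2: unweighted partial sums and Abel summation.} Iterating the recurrence gives $z_t=w^{t-1}z_1+\sum_{s=1}^{t-1}w^{t-1-s}\epsilon_s$, hence for $S_n:=\sum_{t=1}^{n}z_t$,
\[
S_n=z_1\cdot\frac{w^n-1}{w-1}+\sum_{s=1}^{n-1}\epsilon_s\cdot\frac{w^{n-s}-1}{w-1}.
\]
Since $|w-1|>0$ and $\sum_{s=1}^{n-1}|\epsilon_s|=O(\log n)$, we get $|S_n|\le C_2(E_1,E_2)\log n$. Abel summation against $b_t=1/(1+t)$, using $b_t-b_{t+1}=O(1/t^2)$, then gives
\[
\Biggl|\sum_{t=1}^{n}\frac{z_t}{1+t}\Biggr|\le\frac{|S_n|}{1+n}+\sum_{t=1}^{n-1}\frac{|S_t|}{(1+t)(2+t)}\le D(E_1,E_2),
\]
a bound independent of $n$. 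Taking real parts (and combining the $(\theta_1-\theta_2),(\theta_1+\theta_2)$ pieces in the product case) proves \eqref{Gcons4} and \eqref{Gcons5}, in fact with $\varepsilon=0$; the stated weaker form $D+\varepsilon\log n$ is then automatic.

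\textbf{Main obstacle.} The one substantive point is Step~1: the recurrence \eqref{PrufT} is an affine motion of $\cot\pi\theta$, not a genuine rotation, so one must first invoke Lemma~\ref{Leap3} (valid once $|V(n)/\sin\pi k|<\tfrac12$, i.e.\ for $t$ large) to convert it into the approximate rotation $\phi(t+1)=\phi(t)+\omega+O(V(t))$. After that, the whole argument is telescoping a geometric series, and the arithmetic conditions on $k(E_1),k(E_2)$ serve solely to keep $w=e^{i\omega}$ bounded away from $1$; dropping any one of them would allow $w=1$ and destroy the geometric-series bound in Step~2.
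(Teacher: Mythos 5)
The paper itself does not prove this lemma: it is imported verbatim from \cite{LDJ} (Lemma~4.2 there), so there is no in-paper proof to compare your argument against. Judged on its own, your proof is correct and self-contained, and it uses the mechanism that underlies the cited result: reduce both bounds to sums $\sum_t z_t/(1+t)$ with $|z_t|=1$ and $z_{t+1}=wz_t+\epsilon_t$, $|\epsilon_t|\le C/(1+t)$, use the arithmetic hypotheses to keep $w=e^{i\omega}$ away from $1$, and then sum by parts. Three remarks. First, your reading of the frequencies is the right one provided the phases are taken to be $4\pi\theta$ and $2\pi(\theta_1\pm\theta_2)$: with the normalization \eqref{L21}--\eqref{PrufT} the angle $\theta$ advances by $k$ per step, and the hypotheses $k(E_1)\neq k(E_2)$, $k(E_i)+k(E_j)\neq 1$ are exactly the conditions $e^{4\pi i k_1}\neq 1$ and $e^{2\pi i(k_1\pm k_2)}\neq 1$; the absent factors of $\pi$ in the lemma's statement are the paper's own notational slip (compare $\sin 2\theta(n,k_i)$ versus $\sin 2\pi\theta(n,k)$ in Section~5), not an error on your part. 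Second, your argument actually yields a bound $D(E_1,E_2)$ \emph{independent of} $n$, since $|S_n|=O(\log n)$ and $\sum_t \log t/t^2<\infty$; this is strictly stronger than the stated $D+\varepsilon\ln n$ and of course implies it, so there is no inconsistency. Third, the one point you should make fully explicit is the treatment of the finitely many initial indices where $|V(t)/\sin\pi k|\ge \tfrac12$ and Lemma~\ref{Leap3} does not yet apply: start the recurrence at some $T_0$ depending on $E_1,E_2$ and the implied constant in $V(n)=O(1)/(1+n)$, and absorb the terms $t<T_0$ into $D$, as you indicate in your closing paragraph. With that made explicit, the proof is complete.
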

\begin{proof}[\bf Proof of Theorem \ref{thm6}]
By the assumption of Theorem \ref{thm6}, for any $M>a$, we have
\begin{equation*}
 | V(n)|\leq \frac{M}{1+n}
\end{equation*}
for large $n$.
By shifting the operator, we can assume
\begin{equation}\label{Gapr76}
  | V(n)|\leq \frac{M}{1+n}
\end{equation}
for all $n>0$.

Fix $\epsilon>0$, which is small.

  Suppose  $E_1,E_2,\cdots E_N\in(0,2)$ so that  \eqref{Gdis} has an $\ell^2(\N)$ solution for each $E_i$, $i=1,2,\cdots,N$. Let $k_i=k(E_i)$ for $i=1,2,\cdots,N$. It leads that
  \begin{equation*}
    \sum_{i=1}^NR(n,k_i)\in \ell^2(\N),
  \end{equation*}
  and  then there exists $B_j\to \infty$ such that
  \begin{equation}\label{Gapr72}
    R (B_j,k_i) \leq  B_j^{-\frac{1}{2}},
  \end{equation}
   for all $i=1,2,\cdots,N$.

   Notice that we   have
   \begin{equation*}
    R(n,k_i)=O(1)
   \end{equation*}
    for all $i=1,2,\cdots,N$.

   By \eqref{PrufR}, one has
\begin{equation}\label{GPrufRmar14}
   \ln R(n+1,k)^2 - \ln R(n,k)^2=-\frac{V(n)}{\sin \pi k}\sin 2\pi \theta(n,k)+\frac{O(1)}{1+n^2}.
\end{equation}
  By \eqref{Gapr72} and \eqref{GPrufRmar14}, we have
  \begin{equation}\label{Gapr75}
\sum_{n=1}^{B_j} V(n)\sin 2\pi \theta(n,k) \geq (\sin \pi k_i)B_j+O(1),
  \end{equation}
   for all $i=1,2,\cdots,N$.

We consider the Hilbert spaces
\begin{equation*}
  \mathcal{H}_j=\{u\in \R^{B_j}:\sum_{n=1}^{B_j}|u(n)|^2(1+n)<\infty\}
\end{equation*}
with the inner product
\begin{equation*}
    \langle u,v \rangle=\sum_{n=1}^{B_j} u(n)v(n)(1+n).
\end{equation*}

In  $\mathcal{H}_j$, by \eqref{Gapr76} we have
\begin{equation}\label{Gapr77}
  ||V||_{ \mathcal{H}_j}^2\leq M^2\log (1+B_j).
\end{equation}
Let
\begin{equation*}
  e^j_{i}(n)=\frac{1}{\sqrt{A_i^j}}\frac{\sin 2\theta(n,k_i)}{1+n}\chi_{[0,B_j]}(n),
\end{equation*}
where $A_i^j$ is chosen so that $e_i^j$ is a unit vector in $\mathcal{H}_j$.
We have the following estimate,
\begin{eqnarray}
  A_i^j &=& \sum_{n=1}^{B_j}\frac{\sin^2 2\theta(n,k_i)}{1+n} \nonumber\\
   &=&\sum_{n=1}^{B_j}\frac{1}{2(1+n)}- \sum_{n=1}^{B_j}\frac{\cos  4\theta(n,k_i)}{2(1+n)}\nonumber\\
   &=& \frac{1}{2}\log B_j- \sum_{n=1}^{B_j}\frac{\cos  4\theta(n,k_i)}{2(1+n)}+O(1),\label{Gapr791}
\end{eqnarray}
Since $E_i$ is positive ($k_i\in (0,\frac{1}{2})$) for all $i=1,2,\cdots,N$, one has
  \begin{equation*}
    k_i+k_{\tilde{i}}\neq 1,
  \end{equation*}
  for all $1\leq i,\tilde{i}\leq N$.

By \eqref{Gcons4}, one has
\begin{equation}\label{Gcons4Gapr79}
  \left|  \sum _{n=1}^{B_j} \frac{\cos 4 \theta(n,k_i)}{1+n}\right|\leq O(1)+ \epsilon\ln B_j,
\end{equation}
for all $i=1,2,\cdots,N$.

By \eqref{Gapr791} and \eqref{Gcons4Gapr79}, we have
\begin{equation}\label{Gapr79}
  (\frac{1}{2}-\epsilon) \log B_j+O(1)  \leq A_i^j\leq (\frac{1}{2}+\epsilon)\log B_j+O(1).
\end{equation}

By  \eqref{Gcons5}, we have for $i\neq \tilde{i}$,
\begin{equation}\label{Gapr101}
-\epsilon\log B_j+O(1) \leq \sum_{n=1}^{B_j}  \frac{\sin  2\theta(n,k_i)\sin  2\theta(n,k_{\tilde{i}})}{1+n}\leq \epsilon\log B_j+O(1).
\end{equation}
By \eqref{Gapr79} and \eqref{Gapr101}, one has
\begin{equation*}
-4\epsilon +\frac{O(1)}{\log B_j}  \leq \langle e_i^j,e_{ \tilde{i}}^j \rangle\leq 4\epsilon +\frac{O(1)}{\log B_j},
\end{equation*}
for  all $1\leq i,\tilde{i}\leq N$ and $i\neq \tilde{i}$.
It implies  for large $j$,
\begin{equation}\label{Gapr78}
-5\epsilon   \leq \langle e_i^j,e_{ \tilde{i}}^j \rangle\leq 5\epsilon,
\end{equation}
for  all $1\leq i,\tilde{i}\leq N$ and $i\neq \tilde{i}$.

By \eqref{Gapr79} and \eqref{Gapr75}
\begin{equation}\label{Gapr710}
 \langle V,e^j_i \rangle_{\mathcal{H}_j}\geq \sqrt{2}(1-2\epsilon)\sin \pi k_i \sqrt{\log B_j},
\end{equation}
for large $j$.
By \eqref{Gapr71} and \eqref{Gapr78}, one has
\begin{equation}\label{Gapr711}
\sum_{i=1}^N  |\langle V,e^j_i\rangle_{\mathcal{H}_j}|^2\leq (1+10N\epsilon)||V||_{\mathcal{H}_j}.
\end{equation}
By \eqref{Gapr710}, \eqref{Gapr711} and \eqref{Gapr77},
we have
\begin{equation*}
  \sum_{i=1}^N 2(1-2\epsilon)^2\sin ^2(\pi k_i)  \log B_j\leq (1+10N\epsilon) M^2 \log  B_j+O(1).
\end{equation*}
Let $j\to \infty$ and then $\epsilon\to 0$, we get
\begin{equation*}
  \sum_{i=1}^N 2\sin ^2(\pi k_i)   \leq  M^2 ,
\end{equation*}
for any $M>a$. This implies
\begin{equation*}
  \sum_{i=1}^N 2\sin ^2(\pi k_i)   \leq  a^2 .
\end{equation*}
By the fact that $E_i=2\cos \pi k_i$, one has
\begin{equation}\label{Gapr102}
  \sum_{i=1}^N (4-E_i^2)   \leq  2a^2 .
\end{equation}
It yields that $H_0+V$ has at most countable  eigen-solutions corresponding to positive energies.
Let $N\to \infty$ in \eqref{Gapr102}, we have
\begin{equation}\label{Gapr103}
  \sum_{E_i\in (0,2)\cap P}^{\infty} (4-E_i^2)   \leq  2a^2 .
\end{equation}
Similarly,  we can prove $H_0+V$ has at most countable   negative eigenvalues in $(-2,0)$ with the same  upper bound  in \eqref{Gapr103}.
Thus
\begin{equation}\label{Gapr104}
  \sum_{E_i \in P,E_i\neq 0}^{\infty} (4-E_i^2)   \leq  4a^2 .
\end{equation}
We may add 4 in the bound of \eqref{Gapr104} if   $Hu=0$ has an $\ell^2(\N)$ solution.
However, if $ a<1$, by Theorem \ref{thm1}, $Hu=0$ can not have an $\ell^2(\N)$ solution.
We finish the proof.
\end{proof}
\section{Proof of Theorems \ref{thm4} and \ref{thm5}}\label{Smany}
Suppose $u(n,E)$ is a solution of $Hu=H_0u+Vu=Eu$.
Let
\begin{equation*}
    \tilde{R}(n,E) =\sqrt{u(n-1,E)^2+u(n,E)^2}.
\end{equation*}

%
%
%
%
%
%
%

    \begin{lemma}(\cite[Theorem 3.1]{MR2945209})\label{Keylemma}
    Let
    \begin{equation*}
          {V}(n)=\frac{M\sin(2\pi k n+\phi)}{n},
    \end{equation*}
    with $k\neq \frac{1}{2}$ and $H=H_0+V$.
     Then for every $E\in(-2,2)$ there exists a base $u^+(n,E)$ and $u^-(n,E)$ of $Hu=Eu$ with the following asymptotics.
    \\
    Case 1. For $E=2\cos\pi k$

    \begin{equation*}
        u^+(n,E)=n^{\frac{M}{4\sin\pi k}}(\cos(\pi k n+\phi/2)+o(1)),
    \end{equation*}
    and
    \begin{equation}\label{Gconsnew1}
        u^-(n,E)=n^{-\frac{M}{4\sin\pi k}}(\sin(\pi k n+\phi/2)+o(1)).
    \end{equation}
    Case 2. For $E=-2\cos\pi k$
    \begin{equation*}
    \begin{array}{l}
        u^+(n,E)=(-1)^nn^{\frac{M}{4\sin\pi k}}(\sin(\pi k n+\phi/2)+o(1)),
        \\
        u^-(n,E)=(-1)^nn^{-\frac{M}{4\sin\pi k}}(\cos(\pi n+\phi/2)+o(1)).
    \end{array}
    \end{equation*}
   Case 3. For $E=2\cos\pi \hat{k}\in(-2;2)\backslash\{\pm2\cos\pi k\}$
    \begin{equation*}
    \begin{array}{l}
        u^+(n,E)=\exp(i\hat{k}n)+o(1),
        \\
        u^-(n,E)=\exp(-i\hat{k} n)+o(1).
    \end{array}
    \end{equation*}
    \end{lemma}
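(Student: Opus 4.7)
The plan is to prove all three cases through the Prüfer transformation of Section~\ref{Absence}, working with $k'=k(E)$, so $E=2\cos\pi k'$, and the recursions~\eqref{PrufR}--\eqref{PrufT} applied to the explicit potential $V(n)=\frac{M\sin(2\pi k n+\phi)}{n}$. The leading log-derivative of the amplitude satisfies
\begin{equation*}
\ln R(n{+}1,k')^{2}-\ln R(n,k')^{2}=-\frac{M\sin(2\pi k n+\phi)\sin 2\pi\theta(n,k')}{n\sin\pi k'}+O(n^{-2}),
\end{equation*}
and by Lemma~\ref{Leap3}, $\theta(n,k')=k'n+\alpha(n)$ with $|\alpha(n{+}1)-\alpha(n)|=O(1/n)$. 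The whole proof is then a careful bookkeeping of product-to-sum expansions plus a Dirichlet test. I would organise it into the three cases below.

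\emph{Case 3 (non-resonant, $\hat k:=k(E)\notin\{k,1-k\}$).} Here $\sin 2\pi\theta(n,\hat k)$ oscillates at frequency $\hat k$, and product-to-sum converts the recursion's source into a combination of $\sin(2\pi(\hat k\pm k)n+\text{phase})/n$ plus $O(n^{-2})$. Since $\hat k\pm k\not\equiv0\pmod{\mathbb Z}$, Dirichlet summation gives uniformly bounded partial sums, so $\ln R^{2}(n)$ is uniformly bounded above and below. Feeding this back into~\eqref{PrufT} and iterating Lemma~\ref{Leap3} promotes $\theta(n,\hat k)=\hat kn+\alpha_{\infty}+o(1)$ for some phase $\alpha_{\infty}$ that depends on the initial data. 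Choosing two initial angles $\alpha_{\infty}$ and $\alpha_{\infty}+\tfrac12$ gives the two linearly independent exponential-type solutions $u^{\pm}(n,E)=e^{\pm i\hat kn}+o(1)$ via the conversion formulas~\eqref{L2}--\eqref{L21}.

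\emph{Case 1 (resonance at $E=2\cos\pi k$, i.e.\ $k'=k$).} Writing $\theta(n,k)=kn+\alpha(n)$ and using the identity
\begin{equation*}
\sin(2\pi kn+\phi)\sin(2\pi kn+2\pi\alpha)=\tfrac12\cos(\phi-2\pi\alpha)-\tfrac12\cos(4\pi kn+\phi+2\pi\alpha),
\end{equation*}
the high-frequency cosine sums to $O(1)$ by Dirichlet, while the zero-frequency piece yields
\begin{equation*}
\ln R(n,k)^{2}=-\frac{M\cos(\phi-2\pi\alpha_{\infty})}{2\sin\pi k}\ln n+O(1),
\end{equation*}
so $R(n,k)\asymp n^{-M\cos(\phi-2\pi\alpha_{\infty})/(4\sin\pi k)}$, where $\alpha_{\infty}=\lim\alpha(n)$ must exist by another Dirichlet argument applied to~\eqref{PrufT}. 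The Prüfer dynamics on $\alpha$ has exactly two asymptotically stable/unstable fixed points modulo~$1$, namely $\alpha_{\infty}=\phi/(2\pi)$ (giving $\cos=+1$, hence the decaying $u^{-}$) and $\alpha_{\infty}=\phi/(2\pi)+\tfrac12$ (giving $\cos=-1$, hence the growing $u^{+}$). Translating via $u(n)=R(n)\sin\pi\theta(n)$ up to the invertible normalisation in~\eqref{L2} gives the claimed $\sin(\pi kn+\phi/2)$ and $\cos(\pi kn+\phi/2)$ amplitudes together with the correct power of $n$.

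\emph{Case 2 (resonance at $E=-2\cos\pi k$, i.e.\ $k'=1-k$).} The transformation $u(n)\mapsto(-1)^{n}u(n)$ conjugates $H$ with $V(n)=M\sin(2\pi kn+\phi)/n$ to the operator with potential $-V(n)$ evaluated at frequency $1-k$; equivalently, replace $\sin 2\pi\theta(n,1-k)$ by $-\sin 2\pi((1-k)n+\alpha)$ in the computation of Case~1. The same product-to-sum mechanism produces the amplitudes $n^{\pm M/(4\sin\pi k)}$, now multiplied by $(-1)^{n}$, and the roles of sine and cosine in the oscillatory factor are swapped because $\sin(2\pi kn+\phi)$ is re-expressed via $\sin(2\pi(1-k)n+\text{phase})$.

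The principal obstacle is the resonant Case~1: one must control the Prüfer angle $\alpha(n)$ with sufficient precision to (a) identify two distinct asymptotic fixed points, and (b) exhibit genuine solutions converging to each (not merely solutions whose angle drifts among them). Step (b) is delicate because the map on $\alpha$ is contracting at one fixed point and expanding at the other; the existence of the subdominant solution $u^{-}$ must be obtained by a fixed-point/perturbative argument applied to the integral form of~\eqref{PrufT}, rather than by forward iteration. Everything else, including Case~3 and the passage from $R,\theta$ back to $u$ via~\eqref{L2}--\eqref{L21}, is routine bookkeeping.
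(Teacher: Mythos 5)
The first thing to say is that the paper contains no proof of Lemma \ref{Keylemma}: it is imported verbatim from \cite[Theorem 3.1]{MR2945209} and used as a black box in Proposition \ref{Twocase}. So there is no in-paper argument to measure you against; the relevant comparison is with Simonov's proof, which does not use Pr\"ufer variables at all. It proceeds by asymptotic integration of the transfer-matrix recursion: one diagonalizes the free transfer matrix, removes the resonant (non-summable, non-oscillating) term by a Harris--Lutz-type substitution, and then applies the Benzaid--Lutz discrete Levinson theorem, which \emph{automatically} delivers a full basis of solutions --- in particular the subdominant one --- together with the stated phases $\phi/2$ and the $(-1)^n$ factors. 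Your route through the Pr\"ufer equations \eqref{PrufR}--\eqref{PrufT} of Section \ref{Absence} is a genuinely different (and more in-the-spirit-of-this-paper) approach; your bookkeeping is correct where you carry it out: the averaged angle dynamics in Case 1 has fixed points $\alpha_\infty\in\{\tfrac{\phi}{2\pi},\tfrac{\phi}{2\pi}+\tfrac12\}$, the corresponding amplitude exponents are $\mp M/(4\sin\pi k)$, the Dirichlet test disposes of all frequencies in $\{2k,\,\hat k\pm k,\,2\hat k\}\not\subset\Z$ in Case 3, and the reduction $u\mapsto(-1)^n u$ (sending $(V,E)$ to $(-V,-E)$, i.e.\ $\phi\mapsto\phi+\pi$) correctly explains the sine/cosine swap in Case 2.

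The genuine gap is the one you yourself flag, and it is not a small one: the existence of a solution whose Pr\"ufer angle converges to the \emph{repelling} fixed point, i.e.\ the decaying solution $u^-$ of \eqref{Gconsnew1}. Forward iteration of \eqref{PrufT} (via Lemma \ref{Leap3} or Lemma \ref{Leapr31}) from generic initial data can only exhibit the attracting fixed point and hence only $u^+$; saying that $u^-$ "must be obtained by a fixed-point/perturbative argument applied to the integral form of \eqref{PrufT}" names the right strategy but does not execute it, and this construction is precisely the content of the theorem --- it is also precisely what Proposition \ref{Twocase} needs, since the whole point there is to tune $\phi_E$ so that a \emph{prescribed} boundary condition lands on the subdominant solution. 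To close the gap you would need either a backward/contraction-mapping construction of the stable manifold of the repelling fixed point (with quantitative control showing the angle error is $o(1)$, not merely bounded, so that the factor $\sin(\pi kn+\phi/2)+o(1)$ and not just $O(1)$ oscillation comes out), or simply to invoke the discrete Levinson theorem as Simonov does. A secondary loose end: in Case 1 you also need convergence of the angle $\alpha(n)\to\alpha_\infty$ at a summable rate before you may replace $\cos(\phi-2\pi\alpha(n))$ by $\cos(\phi-2\pi\alpha_\infty)$ in the amplitude sum; a bare Dirichlet bound on the partial sums of the angle increments gives boundedness of $\alpha(n)$, not convergence, so this step also requires the stable/unstable manifold analysis rather than preceding it.
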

\begin{proposition}\label{Twocase}
Let  $E\neq 0$ and $ A=\{\tilde{E}_j\}_{j=1}^m$ be in $(-2,2)$ such that $E\notin (-A)\cup A$.  Suppose $E$ and $\{\tilde{E}_j\}_{j=1}^m$ are different.
Suppose  $\theta_0\in[0,\pi)$. Let $n_1>n_0>b$.
Then there exist constants $K(E, A)$, $C(E, A)$ (independent of $b, n_0$ and $n_1$),     and potential $\widetilde V(n,E,A,n_0,n_1,b,\theta_0)$  such that  for $n_0-b>K(E,A)$ the following holds:

   \begin{description}
     \item[Potential]   for $n_0\leq n \leq n_1$, ${\rm supp}(\widetilde V)\subset(n_0,n_1)$,  and
     \begin{equation}\label{thm141}
        |\widetilde V(n,E,A,n_0,n_1,b,\theta_0)|\leq \frac{C(E, A)}{n-b}.
     \end{equation}

     \item[Solution for $E$] the solution of $(H_0+\widetilde V)u=Eu$ with boundary condition $\frac{u(n_0,E)}{u(n_0-1,E)}=\tan \theta_0$ satisfies
     \begin{equation}\label{thm142}
        \tilde R(n_1,E)\leq  C(E,A)(\frac{n_1-b}{n_0-b})^{-100}\tilde R(n_0,E)
     \end{equation}
     and  for $n_0<n<n_1$,
      \begin{equation}\label{thm143}
       \tilde R(n,E)\leq C(E,A) \tilde R(n_0,E).
     \end{equation}
      \item[Solution for $\tilde{E}_j$] any  solution of $(H_0+\widetilde V)u=\tilde{E}_ju$ satisfies
      for $n_0<n\leq n_1$,
      \begin{equation}\label{thm144}
       \tilde R(n,\tilde{E}_j)\leq  C(E,A) \tilde R(n_0,\tilde{E}_j).
     \end{equation}
   \end{description}
   \begin{proof}

   For simplicity, denote by $K=K(E,A)$,  $C=C(E,A)$ and $k=k(E)$.
   Let $E=2\cos\pi k$.
   By the assumption that $E\neq 0$, one has $k\neq \frac{1}{2}$.
   By the assumption, we have $\tilde{E}_j\neq \pm E$.

   By shifting the operator $b$ unit, we only need to  consider $b=0$.
   For $n\geq n_0$, define
   \begin{equation}\label{GdefV}
    \tilde{V}(n)=\frac{400}{\sin \pi k(E)}\frac{\sin(2\pi k n+\phi_E)}{n},
   \end{equation}
   where $\phi_E$ will be determined later.

   By Case 1 of Lemma \ref{Keylemma}, one of the solution of $Hu=H_0u+\tilde{V}u=Eu$ satisfies \eqref{Gconsnew1}.
   By adapting $\phi_E$ in \eqref{GdefV}, we can make sure that the solution of $Hu=H_0u+\tilde{V}u=Eu$ with boundary condition $\frac{u(n_0,E)}{u(n_0-1,E)}=\tan \theta_0 $
   satisfies \eqref{Gconsnew1}.
   Thus (choosing $C$ large enough in \eqref{GdefV}), one has
   \begin{equation}\label{thm142new}
        \tilde R(n_1,E)\leq  C (\frac{n_1}{n_0})^{-100}\tilde R(n_0,E)
     \end{equation}
     and  for $n_0<n<n_1$,
      \begin{equation}\label{thm143new}
       \tilde R(n,E)\leq  C\tilde R(n_0,E).
     \end{equation}
     Those prove \eqref{thm142} and \eqref{thm143}.

\eqref{thm144} follows from Case 3 of Lemma \ref{Keylemma}.

\end{proof}

  \end{proposition}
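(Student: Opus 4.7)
The plan is to build $\widetilde V$ explicitly as a (truncated) Wigner--von Neumann potential tuned to the target energy $E$, and read off all three conclusions from Lemma~\ref{Keylemma}. First I would reduce to $b=0$ by shifting the half--line, which is harmless since $H_0$ is translation invariant and every estimate in the statement depends on $n-b$, $n_0-b$, $n_1-b$ only. Since $E\neq 0$, writing $E=2\cos\pi k$ gives $k\neq \tfrac12$, which is exactly the hypothesis Lemma~\ref{Keylemma} requires.

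I would then set
\begin{equation*}
\widetilde V(n)\;=\;\frac{M}{n}\,\sin(2\pi k n+\phi_E)\,\chi_{(n_0,n_1)}(n),
\end{equation*}
with $M=M(E)$ chosen so that the exponent $M/(4\sin\pi k)$ in Lemma~\ref{Keylemma} is $\geq 100$ (e.g.\ $M=400/\sin\pi k$), and with $\phi_E\in[0,2\pi)$ to be fixed below. The pointwise bound \eqref{thm141} is then immediate with $C(E,A)=M$. Extending $\widetilde V$ by $0$ outside $(n_0,n_1)$ is also harmless since Lemma~\ref{Keylemma} describes the full asymptotics of a basis on all of $\N$; the truncation only affects the constants.

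Next I would handle the three decay/boundedness estimates in order. For $E$: Case~1 of Lemma~\ref{Keylemma} furnishes a basis $u^\pm(\cdot,E)$ with $u^-\sim n^{-M/(4\sin\pi k)}\sin(\pi k n+\phi_E/2)$ and $u^+$ polynomially growing. The solution singled out by the boundary condition $u(n_0)/u(n_0-1)=\tan\theta_0$ is a specific linear combination $\alpha u^++\beta u^-$; the key point is that its initial phase at $n_0$ depends, up to an $o(1)$ error as $n_0\to\infty$, on the single parameter $\phi_E$ through the leading sinusoid $\sin(\pi k n_0+\phi_E/2)$. For $n_0>K(E,A)$ with $K(E,A)$ large enough to absorb this $o(1)$, the map $\phi_E\mapsto$ (initial phase at $n_0$) is a small perturbation of a surjection onto $S^1$, so I can choose $\phi_E$ forcing $\alpha=0$. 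Then $\tilde R(n,E)\lesssim n^{-100}\tilde R(n_0,E)\cdot n_0^{100}$ on $[n_0,n_1]$, yielding \eqref{thm142} and \eqref{thm143}. For each $\tilde E_j$: the hypothesis $E\notin (-A)\cup A$ translates into $\tilde E_j\neq\pm E$, so $k(\tilde E_j)\neq k$ and $k(\tilde E_j)\neq 1-k$, placing us in Case~3 of Lemma~\ref{Keylemma}. Every solution is then $O(1)$ in $\tilde R$ relative to its initial data, giving \eqref{thm144}.

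The main obstacle is the phase--matching step: showing that the single free parameter $\phi_E$ suffices to annihilate the growing $u^+$ component for a \emph{prescribed} $\theta_0$. I would make this rigorous by writing the solution in Pr\"ufer coordinates and noting that the $u^\pm$ basis (Case~1) translates into a $\phi_E$--dependent rotation of the Pr\"ufer angle at $n_0$ whose derivative in $\phi_E$ is bounded away from zero modulo $o(1)$; increasing $K(E,A)$ if necessary then makes this map a genuine diffeomorphism near the target, so $\phi_E$ can be solved for explicitly. Once this matching lemma is in hand, the three estimates \eqref{thm142}--\eqref{thm144} follow directly from Lemma~\ref{Keylemma}, with constants $C(E,A)$ depending only on $\sin\pi k$ and the separations $|E\pm\tilde E_j|$.
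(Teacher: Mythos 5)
Your proposal is correct and follows essentially the same route as the paper: the same Wigner--von Neumann potential with amplitude $400/\sin\pi k$, phase-matching of $\phi_E$ to the prescribed boundary condition via Case~1 of Lemma~\ref{Keylemma}, and Case~3 for the energies in $A$. In fact you spell out the phase-matching step (which the paper only asserts) in more detail than the published proof does.
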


\begin{proof}[\bf Proof of Theorems \ref{thm4} and \ref{thm5}]
Once we have Proposition \ref{Twocase} at hand,
 Theorems \ref{thm4} and \ref{thm5}  can be proved by the   piecewise functions gluing technics from  \cite{jl,ld}.

\end{proof}
 \section*{Acknowledgments}
 I would like to thank Svetlana Jitomirskaya for comments on earlier versions of the manuscript.
    W.L. was supported by the AMS-Simons Travel Grant 2016-2018. This research was also supported by  NSF DMS-1401204 and  NSF DMS-1700314.

\footnotesize

\end{document}